\newtheorem{mainthm}{Theorem}
\newtheorem{thm}{Theorem}[section] 
\newtheorem{lemma}[thm]{Lemma}
\newtheorem{prop}[thm]{Proposition}
\newtheorem{cor}[thm]{Corollary}
\theoremstyle{definition}
\newtheorem{example}[thm]{Example}
\newtheorem{defn}[thm]{Definition}
\DeclareMathOperator{\Fl}{Fl}
\DeclareMathOperator{\GL}{GL}
\DeclareMathOperator{\SL}{SL}
\newcommand{\bP}{{\mathbb P}}
\newcommand{\N}{{\mathbb N}}
\newcommand{\Z}{{\mathbb Z}}
\newcommand{\C}{{\mathbb C}}
\newcommand{\cA}{{\mathcal A}}
\newcommand{\cB}{{\mathcal B}}
\newcommand{\cR}{{\mathcal R}}
\newcommand{\wt}{\widetilde}
\newcommand{\wh}{\widehat}
\newcommand{\ignore}[1]{}
\newcommand{\xra}{\xrightarrow}
\newcommand{\pic}[2]{\includegraphics[scale=#1]{#2}}
\begin{document}

\title[The puzzle conjecture for two-step flag manifolds]{The puzzle
  conjecture for the cohomology of two-step flag manifolds}

\date{March 31, 2016}

\author[A. S. Buch]{Anders Skovsted Buch}
\address{Department of Mathematics, Rutgers University, 110
  Frelinghuysen Road, Piscataway, NJ 08854, USA}
\email{asbuch@math.rutgers.edu}

\author[A. Kresch]{Andrew Kresch}
\address{Institut f\"ur Mathematik,
Universit\"at Z\"urich, Winterthurerstrasse 190,
CH-8057 Z\"urich, Switzerland}
\email{andrew.kresch@math.uzh.ch} 

\author[K. Purbhoo]{Kevin Purbhoo}
\address{Combinatorics \& Optimization Department, University of Waterloo
200 University Ave W, Waterloo, Ontario, Canada N2L 3G1}
\email{kpurbhoo@math.uwaterloo.ca}

\author[H. Tamvakis]{Harry Tamvakis}
\address{Department of Mathematics, University of Maryland, 1301 Mathematics
Building, College Park, MD 20742, USA}
\email{harryt@math.umd.edu}

\thanks{The authors were supported in part by NSF grants DMS-0906148
  and DMS-1205351 (Buch), the Swiss National Science Foundation
  (Kresch), an NSERC discovery grant (Purbhoo), and NSF grants 
  DMS-0901341 and DMS-1303352 (Tamvakis).}

\begin{abstract}
  We prove a conjecture of Knutson asserting that the Schubert
  structure constants of the cohomology ring of a two-step flag
  variety are equal to the number of puzzles with specified border
  labels that can be created using a list of eight puzzle pieces.  
  As a consequence, we obtain a puzzle formula for the Gromov-Witten
  invariants defining the small quantum cohomology ring of a Grassmann
  variety of type A.  
  The proof of the conjecture proceeds by showing that the puzzle 
  formula defines an associative product on the cohomology ring of the
  two-step flag variety.  
  It is based on an explicit bijection of
  gashed puzzles that is analogous to the jeu de taquin algorithm but
  more complicated.
\end{abstract}

\subjclass[2010]{Primary 05E05; Secondary 14N15, 14M15}

\maketitle

\section{Introduction}\label{sec:intro}

At the end of the last millennium, Knutson gave an elegant conjecture 
for the Schubert structure constants 
of the cohomology ring
of any partial flag variety $\SL(n)/P$ of type A \cite{knutson:conjectural}.  
The conjecture states that each
Schubert structure constant in $H^*(\SL(n)/P; \Z)$ is equal to the number
of triangular puzzles with specified border labels that can be created
using a list of puzzle pieces. The special case for Grassmannians was
established in 
\cite{knutson.tao.woodward:honeycomb, knutson.tao:puzzles}.
Unfortunately, Knutson quickly discovered
counterexamples to his general conjecture. Buch, Kresch, and Tamvakis
later proved that the Gromov-Witten invariants defining the small
quantum cohomology ring of a Grassmann variety are equal to Schubert
structure constants on two-step flag varieties, and it was suggested
that Knutson's conjecture might be true in this important special
case \cite{buch.kresch.tamvakis:gromov-witten}.
This was supported by verifying with the help of a computer that the
conjecture is correct for all two-step varieties 
$\Fl(a,b;n)$ with $n \leq 16$. 
The purpose of the present paper is to give a proof of Knutson's
conjecture for arbitrary two-step flag varieties.
As a consequence, we obtain a {\em quantum Littlewood-Richardson rule}: 
a puzzle formula for the three point, genus
zero Gromov-Witten invariants on any Grassmannian of type A,
see section~\ref{sec:qlrrule}.

After Knutson formulated his general conjecture and the 
work~\cite{buch.kresch.tamvakis:gromov-witten} appeared,
a different positive formula for the Schubert structure
constants on two-step flag varieties was proved by Coskun
\cite{coskun:littlewood-richardson}.
This rule expresses the structure constants as the number of
certain chains of diagrams called {\em mondrian tableaux}, 
which correspond
to the components of a degeneration of an intersection of Schubert
varieties.  Although it is well adapted to the geometry, Coskun's
rule does not address the validity of Knutson's conjecture for two-step
flag varieties.

Recent work of Knutson and Purbhoo \cite{knutson.purbhoo:product}
shows that the Belkale-Kumar coefficients
\cite{belkale.kumar:eigenvalue} for $\SL(n)/P$ are computed by a
special case of Knutson's original puzzle conjecture.  This special
case uses only a subset of the puzzle pieces, and it is not limited to
2-step flag varieties.  On the other hand, the set of Belkale-Kumar
coefficients is a proper subset of the structure constants of the
cohomology ring of any flag variety other than a Grassmannian.

In this paper, a {\em puzzle piece\/} means a (small) triangle from
the following list.
\[
\pic{.7}{st0} \ \ \pic{.7}{st1} \ \ 
\pic{.7}{st2} \ \ \pic{.7}{st3} \ \ 
\pic{.7}{st4} \ \ \pic{.7}{st5} \ \ 
\pic{.7}{st6} \ \ \pic{.7}{st7}
\]
In Knutson's original conjecture the side labels of the puzzle pieces
were parenthesized strings of the integers 0, 1, and 2.  The labels
that are greater than two can be translated to such strings as
follows:
\[
\text{$3 = 10$, \ $4 = 21$, \ $5 = 20$, \ $6= 2(10)$, \ and \ $7 = (21)0$.}
\]
This description adds intuition to the puzzle pieces.  However, we
will stick to labels in the set $\{0,1,2,3,4,5,6,7\}$ in this paper.
The labels $0,1,2$ are called {\em simple\/} and the other labels
$3,4,5,6,7$ are called {\em composed}.

A {\em triangular puzzle\/} is an equilateral triangle made from
puzzle pieces with matching labels.  The puzzle pieces may be rotated
but not reflected.  If all labels on the border of a puzzle are
simple, then all composed labels in the puzzle are uniquely determined
from the simple labels.  One may therefore omit the edges with
composed labels in pictures of puzzles.  The following two pictures
show the same puzzle with and without its composed labels.
\[
\pic{.6}{puz7lab} \ \ \ \ \ \ \ \ \pic{.6}{puz2lab}
\]

A puzzle with simple border labels is the same as an equilateral
triangle made of {\em composed puzzle pieces\/} from the following
list.  All can be rotated and the fourth and sixth can be stretched
in the direction of the longest side:
\[
\pic{.6}{st0} \ \ \pic{.6}{st1} \ \ \pic{.6}{st2} \ \ \pic{.6}{bp01}
\ \ \pic{.6}{bp02} \ \ \pic{.6}{bp12}
\]

The Schubert varieties $X_u$ of the flag manifold $X = \Fl(a,b;n)$ are
indexed by integer vectors $u$ of length $n$, with $a$ entries equal
to 0, $b-a$ entries equal to 1, and $n-b$ entries equal to 2, see
section~\ref{sec:strategy}.  Such vectors will be called {\em
  012-strings\/} for $X$.  Knutson's conjecture for two-step flag
varieties is the following result.

\begin{mainthm}\label{thm:main}
  Let $X_u$, $X_v$, and $X_w$ be Schubert varieties in $\Fl(a,b;n)$.
  Then the triple intersection number
  \[
  \int_{\Fl(a,b;n)} [X_u] \cdot [X_v] \cdot [X_w]
  \]
  is equal to the number of triangular puzzles for which $u$, $v$, and
  $w$ are the labels on the left, right, and bottom sides, in
  clockwise order.
  \[
  \psfrag{u}{\!\!\!\!\!$u$}
  \psfrag{v}{\ \ $v$}
  \psfrag{w}{\raisebox{-3mm}{$w$}}
  \pic{.5}{tripuz}\vspace{2mm}
  \]
\end{mainthm}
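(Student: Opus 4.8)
Write $N(u,v,w)$ for the number of triangular puzzles whose clockwise list of border labels is $(u,v,w)$; this vanishes unless $u,v,w$ are $012$-strings with $\codim X_u+\codim X_v+\codim X_w=\dim\Fl(a,b;n)$, and the theorem asserts that then $N(u,v,w)=\int_{\Fl(a,b;n)}[X_u]\cdot[X_v]\cdot[X_w]$. Let $u^\vee$ denote the reversal of the $012$-string $u$, so that $[X_{u^\vee}]$ is the Poincar\'e dual of $[X_u]$. My plan is to introduce the $\Z$-bilinear \emph{puzzle product} $\circledast$ on $A:=H^*(\Fl(a,b;n);\Z)$ determined on Schubert classes by
\[
[X_u]\circledast[X_v]\ :=\ \sum_w N(u,v,w^\vee)\,[X_w],
\]
and to prove $\circledast=\cup$; since $[X_u]\cup[X_v]=\sum_w\bigl(\int[X_u][X_v][X_{w^\vee}]\bigr)[X_w]$, this is exactly the theorem. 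The two products agree once one knows that (i) $\circledast$ is associative, (ii) the unit $1\in A$ is a two-sided unit for $\circledast$, and (iii) $\sigma\circledast x=\sigma\cup x$ for all $x\in A$ whenever $\sigma$ is a Schubert class pulled back from one of the Grassmannians $\Gr(a,n)$, $\Gr(b,n)$ along the projections $\pi_1\colon\Fl(a,b;n)\to\Gr(a,n)$, $\pi_2\colon\Fl(a,b;n)\to\Gr(b,n)$. Indeed, $\Fl(a,b;n)$ is a Grassmann bundle over each of $\Gr(a,n)$ and $\Gr(b,n)$, so $A$ is generated as a ring by the special Schubert classes pulled back from the two factors; granting (i)--(iii), an induction on the length of a monomial in these generators gives $y\circledast x=y\cup x$ for all $x,y\in A$, the inductive step resting on the identity $\sigma_1\circledast(\sigma_2\circledast x)=\sigma_1\cup(\sigma_2\cup x)$, which uses only (iii) and associativity and never commutativity of $\circledast$; commutativity of $\circledast$ is then automatic.

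Parts (ii) and (iii) should be the soft ingredients. For (ii), one checks directly that any puzzle having a side labeled by the $012$-string of $1$ is rigid, forcing its other two border strings to be mutual reverses; hence $1\circledast[X_v]=[X_v]$. For (iii), if the $012$-string $u$ corresponds to a class pulled back from $\Gr(a,n)$ (the case of $\Gr(b,n)$ being handled symmetrically via the involution $0\leftrightarrow 2$ on labels, which is a symmetry of the puzzle pieces), I expect the composed labels in any puzzle with that border string to be forced into a rigid configuration, so that $N(u,v,w)$ collapses to the number of Grassmannian puzzles computing the corresponding structure constant on $\Gr(a,n)$. Invoking the Grassmannian case established in \cite{knutson.tao:puzzles}, together with the projection formula and the description of $A$ as a module over $\pi_1^*H^*(\Gr(a,n))$, then yields $\sigma\circledast x=\sigma\cup x$.

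The substantive part is (i). Equating coefficients of $[X_z]$ in the two bracketings of $[X_u]\circledast[X_v]\circledast[X_w]$ and relabeling, associativity of $\circledast$ is equivalent to the four-string identity
\[
\sum_e N(u,v,e^\vee)\,N(e,w,z)\ =\ \sum_f N(v,w,f^\vee)\,N(u,f,z)
\]
for all $012$-strings $u,v,w,z$. A term $N(u,v,e^\vee)\,N(e,w,z)$ on the left counts the ways to glue a triangular puzzle with border $(u,v,e^\vee)$ to one with border $(e,w,z)$ along their common edge — the two incident border strings being mutual reverses, the puzzle pieces match along the seam — so the left side counts puzzle-fillings of the resulting rhombus-shaped region together with a distinguished separating path, the \emph{gash}, carrying the summed labels, while the right side counts fillings of the same region in which the four boundary strings are distributed around the rhombus in the other way, that is, with the gash in a different position. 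The plan is to construct an explicit bijection between the two families by \emph{migrating} the gash across the puzzle one elementary step at a time: each step is a local replacement of a bounded cluster of puzzle pieces in a neighborhood of the gash, in the spirit of a single jeu de taquin slide, and one must check that the elementary moves are reversible and confluent, so that they compose to a well-defined bijection independent of the order in which they are performed.

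The main obstacle is precisely this construction. A gash will in general run along edges carrying composed labels, so the catalogue of elementary moves near such edges, and the verification that each is invertible and that their composite is order-independent, is considerably more intricate than in classical jeu de taquin; organizing the bookkeeping so that the global bijection of gashed puzzles is manifestly well defined is where the real work of the proof lies. Once the four-string identity is established, the theorem follows by combining (i)--(iii) with the structural reduction of the first paragraph.
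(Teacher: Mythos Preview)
Your overall framework—define a puzzle product $\circledast$ and prove $\circledast=\cup$ using that $H^*(\Fl(a,b;n))$ is generated by special classes—is the paper's strategy in spirit, but your decomposition into (i), (ii), (iii) is genuinely different, and the step you label ``soft'' is where the gap lies.  The paper never proves full associativity of $\circledast$.  Via Lemma~\ref{lem:principle} it needs only $\mu(1,m)=m$ (your (ii), handled in Section~\ref{sec:multone}) and the single \emph{mixed} identity $\mu(r\cup s,m)=\mu(r,s\cup m)$ for $s$ a Pieri class, i.e.\ $\sum_{u\xra{p}u'}C^{w}_{u',v}=\sum_{v\xra{p}v'}C^{w}_{u,v'}$ (equation~(\ref{eqn:assoc})).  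This is what the gash bijection establishes.  The gash is not a diagonal seam across a rhombus as you picture it, but a local two-edge defect recording a single Pieri modification of the top border; the $80$ swap rules push that defect down one row, and the subtlety your sketch does not anticipate is that successive propagation paths can cross, forcing several propagations to be run simultaneously through the generalized Pieri relation on label strings of Section~\ref{sec:pieri}.

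Your (iii) is the real problem.  For $u$ pulled back from $\Gr(a,n)$ the remaining borders $v,w$ are arbitrary $012$-strings; there is no mechanism forcing a two-step puzzle with such $u$ on one side to ``collapse'' to a Grassmannian puzzle, and the number you want is not a Grassmannian Littlewood--Richardson coefficient but a coefficient of the $\pi_1^*H^*(\Gr(a,n))$-module structure on $H^*(\Fl(a,b;n))$—matching that to a puzzle count is essentially the theorem itself with one argument specialized.  Even the minimal version you would need, that $[X_{\mathbf p}]\circledast[X_v]$ obeys the cohomological Pieri rule, is not visibly easier than (\ref{eqn:assoc}); in the paper it is a \emph{consequence} of (\ref{eqn:assoc}) together with (\ref{eqn:id}) (set $u=\mathbf 0$), not an input.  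Meanwhile your (i) would require a bijection with four arbitrary $012$-borders, strictly more general than the paper's (one border modified by a Pieri move), so the split (i)$+$(iii) adds work on both ends.  The efficient route is to replace (i) and (iii) by the single mixed-associativity identity (\ref{eqn:assoc}) and invoke Lemma~\ref{lem:principle}.
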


Theorem~\ref{thm:main} is proved by establishing that the puzzle rule
defines an associative product on the cohomology ring of $X$, following
the strategy introduced in \cite{buch.kresch.tamvakis:littlewood-richardson}
in the case of Grassmannians.  
The corresponding identities among structure constants are obtained from
an explicit bijection of puzzles.  This bijection is a generalization
of the classical jeu de taquin algorithm on semistandard Young
tableaux (see \cite{purbhoo:mosaics} for a discussion of the bijections
between tableaux and puzzles),
but is significantly more involved and is defined by a list
of 80 different 
rules for propagating a {\em gash\/} from one side of
a puzzle to another.  A gash involves a pair of puzzle edges where the
puzzle pieces do not have matching labels, a notion introduced by
Knutson and Tao \cite{knutson.tao:puzzles}, and extended to our setting
in section~\ref{sec:gashes}.  The large number of 
propagation rules makes a few parts of our proof tedious but still 
straightforward to verify.

One important property of
the jeu de taquin algorithm for tableaux is that consecutive 
Sch\"utzenberger sliding
paths do not cross each other (see e.g.
\cite[section 2]{buch.kresch.tamvakis:littlewood-richardson}).  
This implies that a
suitable bijection of tableaux can be obtained by repeating the jeu
de taquin algorithm several times, an idea which has been used in many
proofs of the classical Littlewood-Richardson rule for Grassmannians,
see e.g.\ \cite{fulton:young, leeuwen:littlewood-richardson,
  buch.kresch.tamvakis:littlewood-richardson} and the references therein.
Unfortunately, this property does not hold for the corresponding {\em
  propagation paths\/} in puzzles for two-step flags, 
which may in fact cross each other. Example~\ref{example:cross} 
illustrates the problem.
We overcome this difficulty by carrying out several propagations
{\em simultaneously\/}, interlacing the individual steps, to obtain 
the desired bijection.  Naturally, this requires some care, and
the precise manner in which the simultaneous propagations are controlled 
is the main technical innovation in this paper.  The issue of crossing
propagation paths shapes our proof in subtle but significant ways;
for example, while there are many possible ways to formulate the 
propagation rules, our presentation is tailored to handle crossings
as seamlessly as possible.

This paper is organized as follows.  In section \ref{sec:strategy} we
reduce the proof of Theorem~\ref{thm:main} to the verification of two
identities which roughly state that the puzzle rule defines an
associative ring.  Section~\ref{sec:multone} proves one of these
identities, which says that multiplication by one has the expected
result.  We also reformulate the puzzle rule in terms of rhombus
shaped puzzles and identify the required properties of a bijection on
such puzzles.  Section~\ref{sec:pieri} defines a relation on strings
of puzzle labels that generalizes the Pieri rule for two-step flag
varieties.  This relation is crucial for carrying out multiple
propagations simultaneously and for dealing correctly with crossing
propagation paths.  In section~\ref{sec:gashes} we give an informal
discussion of propagations in two-step puzzles, after which
section~\ref{sec:propagation} gives the complete list of 
propagation rules together with case-by-case analysis that verifies
that every (unfinished) gash can be moved by a unique rule.  
At a first reading, section~\ref{sec:srA} through \ref{sec:srF} may
very well be skimmed, with greater attention given to section 
\ref{sec:properties}, which records properties essential to the proof 
of the main result.
Section~\ref{sec:general} puts the combinatorial constructions
together to obtain the required bijection and finish the proof.
Finally, section~\ref{sec:qlrrule} applies Theorem~\ref{thm:main} to 
obtain a quantum Littlewood-Richardson rule for the Gromov-Witten
invariants on Grassmannians.

\section{Strategy of the proof}\label{sec:strategy}

Theorem~\ref{thm:main} will be proved by applying the following
principle to the multiplicative action of the cohomology ring
$H^*(X;\Z)$ on itself.  This principle was first applied to classical
Schubert
calculus in \cite{buch.kresch.tamvakis:littlewood-richardson}.  A related
principle in the setting of equivariant cohomology was introduced in
\cite{molev.sagan:littlewood-richardson} and used in
\cite{knutson.tao:puzzles}.

\begin{lemma}\label{lem:principle}
  Let $R$ be an associative ring with unit $1$, let $S \subset R$ be a
  subset that generates $R$ as a $\Z$-algebra, and let $M$ be a left
  $R$-module.  Let $\mu : R \times M \to M$ be any $\Z$-bilinear map
  satisfying that, for all $r \in R$, $s \in S$, and $m \in M$ we have
  $\mu(1,m) = m$ and $\mu(rs,m) = \mu(r,sm)$.  Then $\mu(r,m) = rm$
  for all $(r,m) \in R \times M$.
\end{lemma}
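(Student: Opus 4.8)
The plan is to fix $m \in M$ and show that the set of $r \in R$ for which $\mu(r,m) = rm$ is all of $R$. Call this set $R_m = \{ r \in R : \mu(r,m) = rm \}$. Since $\mu$ is $\Z$-bilinear in its first argument and the module action $r \mapsto rm$ is additive, $R_m$ is an additive subgroup of $R$. Moreover $1 \in R_m$ by the hypothesis $\mu(1,m) = m$. So it suffices to prove that $R_m$ is closed under right multiplication by elements of $S$: if $r \in R_m$ and $s \in S$, then $\mu(rs,m) = \mu(r,sm)$ by hypothesis; we would like this to equal $(rs)m$. The natural move is to apply the hypothesis again to the module element $sm$ in place of $m$, which suggests instead running the argument over all of $M$ at once.

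Accordingly, I would argue as follows. First observe that the hypothesis $\mu(rs,m) = \mu(r,sm)$, together with $\mu(1,m)=m$, gives by induction that $\mu(s_1 s_2 \cdots s_k, m) = s_1 s_2 \cdots s_k \, m$ for all $s_1,\dots,s_k \in S$ and all $m \in M$: indeed $\mu(s_1 \cdots s_k, m) = \mu(s_1 \cdots s_{k-1}, s_k m)$ reduces the length of the monomial, and applying the inductive hypothesis to the element $s_k m \in M$ finishes the step, with the base case $k=0$ being $\mu(1,m)=m$. Thus $\mu(w,m) = wm$ for every word $w$ in the elements of $S$.

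Now a general element $r \in R$ is, since $S$ generates $R$ as a $\Z$-algebra, a finite $\Z$-linear combination $r = \sum_i n_i w_i$ of such words $w_i$ (including the empty word, contributing multiples of $1$). By $\Z$-bilinearity of $\mu$ in the first slot, $\mu(r,m) = \sum_i n_i \, \mu(w_i,m) = \sum_i n_i\, (w_i m) = \big(\sum_i n_i w_i\big) m = rm$, where the middle equality is the word case just established and the third uses additivity of the $R$-action on $M$. This holds for all $m \in M$, so $\mu(r,m) = rm$ for all $(r,m) \in R \times M$, as claimed.

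The only genuine subtlety — and where care is needed rather than real difficulty — is that in the inductive step one must feed the element $s_k m$ (not $m$) back into the inductive hypothesis; this is exactly why the statement is phrased for the module $M$ rather than just for a single fixed element, and why the hypothesis $\mu(rs,m) = \mu(r,sm)$ is quantified over all $m \in M$ and all $r \in R$, not merely over a generating set. One should also note that no associativity or unitality of $\mu$ itself is assumed: $\mu$ is only $\Z$-bilinear, and the conclusion that it coincides with the genuine action is forced entirely by the two displayed relations together with the generation hypothesis. In the application, $R = M = H^*(X;\Z)$, the module action is cup product, $S$ is the set of Schubert classes of codimensions realized by the relevant puzzle-Pieri moves, and $\mu$ is the bilinear map defined by counting puzzles; verifying the two relations $\mu(1,m)=m$ and $\mu(rs,m)=\mu(r,sm)$ for this $\mu$ is the substantive content of the later sections.
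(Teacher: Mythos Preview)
Your proof is correct and follows essentially the same approach as the paper: reduce by $\Z$-linearity in the first argument to the case $r = s_1 \cdots s_k$ a word in elements of $S$, then induct on $k$, using $\mu(rs,m) = \mu(r,sm)$ to peel off the last factor and the base case $\mu(1,m)=m$. The paper's proof is simply a terse one-line version of exactly this argument.
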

\begin{proof}
  Since $R$ is generated by $S$ and $\mu$ is linear in its first
  argument, it is enough to show that $\mu(r,m) = rm$ whenever $m \in
  M$ and $r = s_1 s_2 \cdots s_k$ is a product of factors $s_i \in S$.
  This follows from the assumptions by induction on $k$.
\end{proof}

Let $X = \Fl(a,b;n) = \{(A,B) : A \subset B \subset \C^n \text{ and }
\dim(A)=a \text{ and } \dim(B)=b \}$ be the variety of two-step flags
in $\C^n$ of dimensions $(a,b)$.  Let $e_1,\dots,e_n$ be the standard
basis for $\C^n$.  For a subset $S \subset \C^n$, we let $\langle S
\rangle \subset \C^n$ denote the span of $S$.  A 012-string for $X$ is
a string $u = (u_1,\dots,u_n)$ with $a$ zeros, $b-a$ ones, and $n-b$
twos; this corresponds to a minimal length coset representative
for the parabolic subgroup $S_a \times S_{b-a} \times S_{n-b}$
of the Weyl group $S_n$.

Given a 012-string for $X$, consider the point $(A_u,B_u) \in
X$, where $A_u = \langle e_i : u_i=0 \rangle$ and $B_u = \langle e_i :
u_i \leq 1 \rangle$.  The Schubert variety $X_u \subset X$ is the
closure of the orbit of $(A_u,B_u)$ for the action of the lower
triangular matrices in $\GL(n)$.  Equivalently, $X_u$ is the variety
of points $(A,B) \in X$ for which $\dim(A \cap \langle e_p,\dots,e_n
\rangle) \geq \dim(A_u \cap \langle e_p,\dots,e_n \rangle)$ and
$\dim(B \cap \langle e_p,\dots,e_n \rangle) \geq \dim(B_u \cap \langle
e_p,\dots,e_n \rangle)$ for all $p \in [1,n]$.  The codimension of
$X_u$ in $X$ is equal to the number of inversions $\ell(u) = \# \{
(i,j) : 1 \leq i < j \leq n \text{ and } u_i > u_j \}$.  The Schubert
classes $[X_u]$ given by all $012$-strings for $X$ form a basis for
the cohomology ring $H^*(X;\Z)$.  The Poincar\'e dual of the 012-string
$u = (u_1,u_2,\dots,u_n)$ is the reverse string $u^\vee =
(u_n,u_{n-1},\dots,u_1)$.  With this notation we have $\int_X [X_u]
\cdot [X_v] = \delta_{u^\vee,v}$.

Given 012-strings $u$, $v$, and $w$ for $X$, we let $C^w_{u,v}$ be the
number of triangular puzzles with labels $u$, $v$, and $w$ on the
left, right, and bottom borders, with $u$ and $v$ in clockwise
direction and $w$ in counter-clockwise direction.
\[
\psfrag{u}{\!\!\!\!\!$u$}
\psfrag{v}{\ \ $v$}
\psfrag{w}{\raisebox{-3mm}{$w$}}
\pic{.5}{trilr}\smallskip
\]
Define a $\Z$-bilinear map $\mu : H^*(X;\Z) \times H^*(X;\Z) \to
H^*(X;\Z)$ by
\[ 
\mu([X_u], [X_v]) = \sum_w C^w_{u,v} [X_w] \,,
\]
where the sum is over all 012-strings $w$ for $X$.  It follows from
Poincar\'e duality that Theorem~\ref{thm:main} is equivalent to the
identity
\begin{equation}\label{eqn:lrrule}
  \mu([X_u], [X_v]) = [X_u] \cdot [X_v]
\end{equation}
for all 012-strings $u$ and $v$ for $X$.  We will prove this by
identifying a generating subset $S \subset H^*(X;\Z)$ of special
Schubert classes that satisfies the conditions of
Lemma~\ref{lem:principle}.

Given two 012-strings $u$ and $u'$, with $u = (u_1,u_2,\dots,u_n)$, we
write $u \xrightarrow{1} u'$ if there exist indices $i<j$ such that
(1) $u_i \in \{0,1\}$, (2) $u_j = 2$, (3) $u_k < u_i$ for all $k$ with
$i < k < j$, and (4) $u'$ is obtained from $u$ by interchanging $u_i$
and $u_j$.  This corresponds to a covering relation in the
ordering induced from the Bruhat order on $S_n$.
More generally, for $p \in \N$ we write $u \xrightarrow{p}
u'$ if there exists a sequence $u = u^0 \xrightarrow{1} u^1
\xrightarrow{1} \cdots \xrightarrow{1} u^p = u'$
such that, if $u^t$ is obtained from $u^{t-1}$ by interchanging the
entries of index $i_t$ and $j_t$, then $j_t \leq i_{t+1}$ for all $t
\in [1,p-1]$.  For example, the chain
\[
\begin{split}
&
(0, 2, 1, 1, 0, 0, 2, 0, 2, 0, 2) \xra{\,1\,}
(2, 0, 1, 1, 0, 0, 2, 0, 2, 0, 2) \xra{\,1\,}
(2, 0, 1, 2, 0, 0, 1, 0, 2, 0, 2) \\
& \ \ \ \ \ \ \xra{\,1\,}
(2, 0, 1, 2, 0, 0, 2, 0, 1, 0, 2) \xra{\,1\,}
(2, 0, 1, 2, 0, 0, 2, 0, 1, 2, 0) \,.
\end{split}
\]
implies that
\[
(0, 2, 1, 1, 0, 0, 2, 0, 2, 0, 2) \xra{\,4\,}
(2, 0, 1, 2, 0, 0, 2, 0, 1, 2, 0) \,.
\]
Given an integer $p$ with $0 \leq p \leq n-b$, we let $\mathbf{p}$
denote the 012-string $\mathbf{p} = (0^a, 1^{b-a-1}, 2^p, 1,
2^{n-b-p})$.  This string defines the special Schubert variety
\[
X_{\mathbf p} = \{ (A,B) \in X \mid B \cap \langle e_{b+p},\dots,e_n \rangle
\neq 0 \} \,.
\]
The corresponding special Schubert class is the Chern class
$[X_{\mathbf p}] = c_p(\C^n_X/\cB)$, where $\cA \subset \cB \subset
  \C^n_X = \C^n\times X$ is the tautological flag of subbundles on
  $X$.  The Pieri formula for $X$ states that
  \cite{lascoux.schutzenberger:polynomes*1, sottile:pieris}
\begin{equation}\label{eqn:pieri}
  [X_{\mathbf p}] \cdot [X_u] = \sum_{u \xra{p} u'} [X_{u'}] \,,
\end{equation}
where the sum is over all 012-strings $u'$ for which $u \xra{p} u'$.

We will derive (1) as a consequence of the following two identities,
which will be proved later using an explicit bijection of puzzles
(c.f. \cite[Proposition 1]{buch.kresch.tamvakis:littlewood-richardson}).
Recall that $\mathbf{0} = (0^a, 1^{b-a}, 2^{n-b})$ is the identity
012-string for which $[X_{\mathbf 0}] = 1 \in H^*(X;\Z)$.

\begin{prop}\label{prop:key}
  Let $u$, $v$, and $w$ be 012-strings for $X$ and let $p \geq 0$ be
  an integer.  Then we have
  \begin{equation}\label{eqn:id}
    C^w_{\mathbf{0},u} = \delta_{u,w}
  \end{equation}
  and
  \begin{equation}\label{eqn:assoc}
    \sum_{u \xra{p} u'} C^w_{u',v} = \sum_{v \xra{p} v'} C^w_{u,v'}
    \,.
  \end{equation}
\end{prop}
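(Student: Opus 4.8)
The plan is to prove the two identities of Proposition~\ref{prop:key} separately, as they are of quite different character: \eqref{eqn:id} is a rigidity statement, provable by a direct analysis of the puzzle pieces, whereas \eqref{eqn:assoc} is the combinatorial heart of the paper and requires an explicit bijection of puzzles, generalizing the proof strategy used for Grassmannians in \cite{buch.kresch.tamvakis:littlewood-richardson}. Granting both, the Littlewood--Richardson identity \eqref{eqn:lrrule}, and hence Theorem~\ref{thm:main}, follows via Lemma~\ref{lem:principle} as indicated above.

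For \eqref{eqn:id} I would argue that a triangular puzzle whose left border is the sorted string $\mathbf{0} = (0^a, 1^{b-a}, 2^{n-b})$ is rigid: it is uniquely determined, and it forces the other two borders to both equal $u$. The mechanism, as in the Grassmannian case, is that a run of a single simple label along a boundary segment can be continued inward in only very restricted ways, so the constraint imposed by the sorted $\mathbf{0}$-side propagates across the entire puzzle; reading off the resulting unique puzzle gives $w = u$, hence $C^w_{\mathbf{0},u} = \delta_{u,w}$.

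The identity \eqref{eqn:assoc} will be obtained from a bijection of gashed puzzles. Using the rhombus reformulation of the puzzle rule from Section~\ref{sec:multone} and the Pieri-type relation $\xra{p}$ of Section~\ref{sec:pieri}, I would read the left side of \eqref{eqn:assoc} as the number of pairs consisting of a chain $u \xra{p} u'$ together with a puzzle having borders $u', v, w$, and the right side as the number of pairs consisting of a chain $v \xra{p} v'$ together with a puzzle having borders $u, v', w$. A chain $u \xra{p} u'$ is to be encoded as a \emph{gash} --- a pair of non-matching puzzle edges in the sense of Knutson and Tao --- placed along the left border; the propagation rules of Section~\ref{sec:propagation} then move the gash across the puzzle one step at a time, holding the bottom border $w$ fixed, until it emerges on the right border, where it is decoded as a chain $v \xra{p} v'$. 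Section~\ref{sec:propagation} checks that every unfinished gash admits exactly one applicable rule among the $80$, so the propagation procedure is deterministic and terminates; since the rules are reversible, the resulting correspondence is a bijection, and \eqref{eqn:assoc} follows by comparing cardinalities.

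The main obstacle, and the reason this is substantially harder than the Grassmannian case, is that the $p$ propagation paths arising from the $p$ elementary steps of a chain $u \xra{p} u'$ need not be disjoint: two of them can cross, as Example~\ref{example:cross} illustrates. In the classical jeu de taquin setting consecutive sliding paths do not cross, so one may perform the slides one after another; here that is impossible, and all $p$ propagations must be run simultaneously. The bookkeeping condition $j_t \le i_{t+1}$ built into the definition of $\xra{p}$ is exactly what allows the combined gash to move coherently through such crossings, and the bulk of the work --- showing that this simultaneous propagation is well-defined, terminates, and is reversible, and then assembling it into the required bijection --- is carried out in Sections~\ref{sec:pieri}--\ref{sec:general}.
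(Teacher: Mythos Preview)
Your overall architecture matches the paper's: \eqref{eqn:id} is indeed a rigidity statement (Lemma~\ref{lem:idrow} and Corollary~\ref{cor:idpuz}), and \eqref{eqn:assoc} is indeed proved by a gash-propagation bijection built from the rules of Section~\ref{sec:propagation}. Two points in your description, however, do not match what is actually done and would cause trouble if taken literally.

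First, in the rhombus reformulation (Corollary~\ref{cor:rhombus}) the string $u'$ sits on the \emph{top} border and $v$ on the \emph{bottom}, with $w$ on the left; the gash is introduced on the top and propagated \emph{downward}, one horizontal row at a time, not across from left to right. This row-by-row structure is not incidental: the swap rules of Section~\ref{sec:propagation} are stated and verified only for single-row puzzles, and the full bijection is obtained by iterating the single-row map $\Phi^{u}$ over the $n$ rows (see the proof at the end of Section~\ref{sec:general}). Carrying the gash through the interior of a multi-row puzzle directly is never attempted.

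Second, the condition you cite, $j_t \le i_{t+1}$, is the one from the $012$-string definition in Section~\ref{sec:strategy}, and it \emph{forbids} nesting: each elementary step lies strictly to the right of the previous one. What makes crossing propagation paths manageable is precisely that the paper replaces this by the weaker condition $i_s < j_t$ for $s\le t$ in Definition~\ref{def:pierip} for general label strings, which \emph{allows} one swap to sit inside another (type~A inside type~E). The delicate content is then Lemma~\ref{lem:chainorder}, Proposition~\ref{prop:rightincr}, and Proposition~\ref{prop:inverse}: the right-increasing normal form of a Pieri chain, together with Lemma~\ref{lem:interior} on interior triangles of an E--E propagation, is what guarantees that the row-by-row map is invertible even when paths cross. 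Your phrase ``all $p$ propagations must be run simultaneously'' is slightly off: within a single row they are run sequentially in the right-increasing order, and it is the interleaving of this order with the row-by-row descent that handles crossings correctly (compare the two computations in Example~\ref{example:cross}).
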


Fix an orthogonal form on $\C^n$ and set $\wh X = \Fl(n-b,n-a;n)$.
Then the map $\phi : X \to \wh X$ that sends a point $(A,B)$ to
$(B^\perp,A^\perp)$ is an isomorphism of varieties, called the {\em
  duality isomorphism}.  The corresponding isomorphism of cohomology
rings $\phi^* : H^*(\wh X; \Z) \to H^*(X;\Z)$ is given by $\phi^* [\wh
X_{\wh u}] = [X_u]$, where $\wh u = (2-u_n,2-u_{n-1},\dots,2-u_1)$.

For each integer $p \in [0,a]$, define the 012-string $\wt{\mathbf p}
= (0^{a-p}, 1, 0^p, 1^{b-a-1}, 2^{n-b})$.  This string defines the
special Schubert variety
\[
X_{\wt{\mathbf p}} = \{ (A,B) \in X \mid \dim(A \cap \langle
e_{a-p+2},\dots,e_n \rangle) \geq p \}
\]
and the special Schubert class $[X_{\wt{\mathbf p}}] = c_p(\cA^\vee) \in
H^*(X;\Z)$.  By applying $\phi^*$ to both sides of the Pieri rule
(\ref{eqn:pieri}) for $\wh X$, we obtain the identity
\begin{equation}\label{eqn:pieri2}
  [X_{\wt{\mathbf p}}] \cdot [X_u] 
  = \sum_{u \xra{\wt p} u'} [X_{u'}] 
\end{equation}
in $H^*(X;\Z)$, where we write $u \xra{\wt p} v$ if and only if $\wh u
\xra{p} \wh v$.

The duality isomorphism has a corresponding bijection on puzzles that
reflects a puzzle in a vertical line and substitutes all labels
according to the following rule:
\[
0 \mapsto 2 \ ; \ \ 
1 \mapsto 1 \ ; \ \ 
2 \mapsto 0 \ ; \ \ 
3 \mapsto 4 \ ; \ \ 
4 \mapsto 3 \ ; \ \ 
5 \mapsto 5 \ ; \ \ 
6 \mapsto 7 \ ; \ \ 
7 \mapsto 6 \,.
\]
\[
\pic{.5}{duality}
\]
This bijection implies that we have $C^w_{u,v} = C^{\wh w}_{\wh v,\wh
  u}$ for all 012-strings $u,v,w$ for $X$.  In particular,
equation (\ref{eqn:assoc}) applied to $\wh X$ gives the identity
\begin{equation}\label{eqn:assoc2}
\sum_{u \xra{\wt p} u'} C^w_{u',v} = \sum_{v \xra{\wt p} v'}
C^w_{u,v'} \,,
\end{equation}
for all 012-strings $u,v,w$ for $X$ and integers $p \in [0,a]$.

\begin{proof}[Proof of Theorem~\ref{thm:main}]
  Set $R = M = H^*(X;\Z)$ and 
  \[
  S = \{ [X_{\mathbf p}] : 1 \leq p \leq n-b \}
  \cup \{ [X_{\wt{\mathbf p}}] : 1 \leq p \leq a \}\,.  
  \]
  By using that $X$ is a
  Grassmann bundle over a Grassmann variety it follows that $R$ is
  generated by $S$.  The identity (\ref{eqn:id}) shows that
  $\mu(1,[X_u]) = [X_u]$, and (\ref{eqn:assoc}) and (\ref{eqn:assoc2})
  together with the Pieri formulas (\ref{eqn:pieri}) and
  (\ref{eqn:pieri2}) imply 
\[
   \mu([X_u] \cdot [X_{\mathbf p}], [X_v])
  = \sum_{u \xra{p} u'} \mu([X_{u'}], [X_v])
  = \sum_{v \xra{p} v'} \mu([X_u], [X_{v'}])
  = \mu([X_u],  [X_{\mathbf p}] \cdot [X_v])
\]
for all 012-strings $u$ and $v$ for $X$ and 
$1 \leq p \leq n-b$, and an analogous identity with
$[X_{\wt{\mathbf p}}]$ for $1 \leq p \leq a$.
By the bilinearity of $\mu$ this shows that the conditions of
  Lemma~\ref{lem:principle} are satisfied.  We deduce that 
  equation~\eqref{eqn:lrrule} holds
   for all 012-strings $u$ and $v$,
  as required.
\end{proof}

\section{Multiplication by one}\label{sec:multone}

In this section we prove the first claim in Proposition~\ref{prop:key}
and use it to reformulate the puzzle formula in terms of
rhombus-shaped puzzles.  We need the following lemma.

\begin{lemma}\label{lem:idrow}
  Let ${\mathbf 0} = (0^{n_0},1^{n_1},2^{n_2})$ be an identity string
  and let $x \in \{0,1,2\}$ be a simple label that occurs in this
  string.  Then there exists a unique union of matching puzzle pieces
  of the form
  \[
  \psfrag{0}{\raisebox{-3mm}{${\mathbf 0}$}}
  \psfrag{1}{\raisebox{4mm}{${\mathbf 0'}$}}
  \psfrag{x}{\raisebox{.7mm}{$\!\!\!\!x$}}
  \psfrag{a}{\raisebox{.7mm}{$\ \ x$}}
  \pic{.75}{fillrow2}
  \]
  with left label $x$ and bottom labels ${\mathbf 0}$ in right-to-left
  direction.  The right border of this unique puzzle has label $x$,
  and the labels on the top border is the identity string ${\mathbf
    0'}$ obtained by removing one copy of $x$ from ${\mathbf 0}$.
\end{lemma}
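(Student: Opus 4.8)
The plan is to prove existence and uniqueness simultaneously, by induction on the length $n=n_0+n_1+n_2$ of the string ${\mathbf 0}$, constructing the row of puzzle pieces one small triangle at a time starting from its lower-left corner. For $n=1$ the string is a single label, which must equal $x$ since $x$ occurs in ${\mathbf 0}$; the row degenerates to one upward triangle whose left and bottom edges both carry $x$, and inspection of the eight puzzle pieces shows that the monochromatic triangle with all edges $x$ is the only such piece. Its right edge is $x$ and its (empty) top border is ${\mathbf 0}'=\emptyset$, so the lemma holds in this case.

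For the inductive step I would look at the leftmost upward triangle $U$ of the row. Its left edge is $x$, and its bottom edge is the largest label $m$ occurring in ${\mathbf 0}$, namely $m=2$ if $n_2>0$, $m=1$ if $n_2=0<n_1$, and $m=0$ otherwise. One then runs through the finitely many possibilities for the pair $(x,m)$: in each of them the matching condition and the list of eight puzzle pieces force $U$, and then force the downward triangle immediately to the upper right of $U$, which determines the first top label and the label on the slanted edge that separates this downward triangle from the rest of the row. Deleting from ${\mathbf 0}$ the bottom edge of $U$ (one copy of $m$) leaves an identity string of length $n-1$; applying the inductive hypothesis to the remaining row, whose bottom border is this shorter string read right-to-left, produces the unique completion, and re-attaching the two removed triangles yields the asserted filling and shows that its top border is ${\mathbf 0}$ with exactly one copy of $x$ deleted.

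The step I expect to cause the most trouble is keeping this induction honest. When $x\neq m$, the first triangle --- or the first pair of triangles --- belongs to a (possibly stretched) rhombus piece, so the slanted edge exposed after peeling off the first column carries a \emph{composed} label rather than a simple one, and the leftover region is no longer a row of exactly the form treated by the lemma. The natural remedies are either to peel off an entire composed puzzle piece at a time --- a block of monochromatic triangles, or such a block together with one rhombus --- so that the newly exposed edge is again simple, or to run the induction with a strengthened hypothesis in which the left label is allowed to be an arbitrary puzzle label; in either formulation the argument reduces to a finite verification against the eight puzzle pieces. Throughout, the one subtlety beyond bookkeeping is to respect the cyclic orientation of the edge labels when gluing pieces, so that rotations of puzzle pieces are admitted but reflections are not.
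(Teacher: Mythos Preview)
Your approach --- fill from the left and show every piece is forced --- is the same idea as the paper's. The paper simply organizes it as three direct cases on $x\in\{0,1,2\}$ and, for each, writes down the list of rhombi and the single triangle that must appear over the bottom labels $2^{n_2}1^{n_1}0^{n_0}$, rather than casting it as an induction on $n$.

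Two points in your write-up need correction.

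First, the claim that the down-triangle $D$ is forced once $U$ is known is false. For instance with $x=0$ over a bottom label $1$, the up-triangle is forced (left $0$, bottom $1$, right $3$), but there are \emph{two} down-triangles with left edge $3$: one with $(\text{top},\text{right})=(1,0)$ and one with $(\text{top},\text{right})=(2,6)$. The paper kills the second option by a one-step look-ahead --- the next bottom label is $0$ or $1$, and no up-triangle has left edge $6$ together with such a bottom --- and this is exactly the parenthetical remark in the paper's proof about the alternative rhombus. Similar ambiguities (with up to four choices for $D$) occur whenever $x=m$.

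Second, your diagnosis of when the exposed edge becomes composed is backward. When $x\neq m$ (so $x<m$), the forced pair $(U,D)$ is a complete $\{x,m\}$-rhombus and its right edge is the simple label $x$; the induction would apply verbatim. The composed exposed edge actually arises when $x=m$: then $U$ is the monochromatic $x$-triangle, and if the next bottom label is smaller than $x$, the forced $D$ is the first half of a rhombus leaning the other way, so its right edge is a composed label (e.g.\ $3$, $4$, or $5$).

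Both of your proposed remedies do work. The first --- peel off an entire composed piece rather than a fixed pair of triangles --- is effectively what the paper does: it places whole rhombi (of both orientations) and one monochromatic triangle, so the exposed edge after each step is the simple label $x$. That framing absorbs the look-ahead automatically, and is cleaner than strengthening the induction hypothesis to arbitrary left labels.
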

\begin{proof}
  We consider each possible value of $x$ in turn.  If $x=0$, then the
  shape must be filled with (unions of) puzzle pieces from the
  following list:
  \[
  \raisebox{7mm}{\text{(a)}} \ \pic{.75}{l2020} \hspace{7mm}
  \raisebox{7mm}{\text{(b)}} \ \pic{.75}{l1010} \hspace{7mm}
  \raisebox{7mm}{\text{(c)}} \ \pic{.75}{u000}   \hspace{7mm}
  \raisebox{7mm}{\text{(d)}} \ \pic{.75}{r0000}
  \]
  In fact, if we fill the shape from left to right, then we are forced
  to place the rhombus (a) above each 2 on the bottom border.  After
  this we must place the rhombus (b) above each 1 on the bottom
  border; the only alternative rhombus
  \raisebox{-4mm}{\pic{.5}{l1062}} cannot be used because each 1-label
  on the bottom border is followed by a 0 or a 1 to the right.
  Finally, the triangle (c) must be placed above the first 0 on the
  bottom border, and the rest of the shape must be filled with the
  rhombus (d).

  If $x=1$, then a similar argument shows that the shape must be
  filled with the (unions of) puzzle pieces:
  \[
  \pic{.75}{l2121} \hspace{7mm}
  \pic{.75}{u111} \hspace{7mm}
  \pic{.75}{r1111} \hspace{7mm}
  \pic{.75}{r1010}
  \]
  And if $x=2$, then the shape must be filled with the pieces:
  \[
  \pic{.75}{u222} \hspace{7mm}
  \pic{.75}{r2222} \hspace{7mm}
  \pic{.75}{r2121} \hspace{7mm}
  \pic{.75}{r2020}
  \]
  In all three cases exactly one single triangle is used, with the
  label $x$ on all sides.  This accounts for the removed $x$ on the
  top border.
\end{proof}

The first identity in Proposition~\ref{prop:key} follows from the
following corollary.

\begin{cor}\label{cor:idpuz}
  Let $v$ be any 012-string for $X$ and let ${\mathbf 0} =
  (0^a,1^{b-a},2^{n-b})$ be the identity string.  Then there exists a
  unique triangular puzzle with labels ${\mathbf 0}$ on the left
  border and labels $v$ on the right border, both in clockwise
  direction.
  \[
  {
    \psfrag{u}{\!\!\!\!\!$\mathbf 0$}
    \psfrag{v}{\ \ $v$}
    \psfrag{w}{\raisebox{-3mm}{$v$}}
    \pic{.5}{trilr}
  }
  \]
  The bottom labels of this unique puzzle are $v$, in counter-clockwise
  direction.
\end{cor}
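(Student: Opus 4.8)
The plan is to prove Corollary~\ref{cor:idpuz} by induction on the length $n$ of the 012-strings, using Lemma~\ref{lem:idrow} to peel off the bottom row of the triangular puzzle one puzzle edge at a time, or rather to peel off the entire bottom row in a single application of the lemma. More precisely, suppose we are given a triangular puzzle with left border ${\mathbf 0}=(0^a,1^{b-a},2^{n-b})$ and right border $v$, both read clockwise. The bottom row of such a puzzle is exactly a union of matching puzzle pieces of the shape considered in Lemma~\ref{lem:idrow}: its left edge is the single bottom entry of the left border, namely the label $x := v_1$ read off as the first entry of $v$ in the appropriate direction (since the bottom-left corner edge is shared), its bottom edge carries the labels that will become the bottom border $w$ of the puzzle, and its top edge carries a shorter identity string. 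Wait — I need to be careful about which border supplies the left label $x$ of the bottom row; by the geometry of the triangle, the left side of the bottom row is the bottommost edge of the \emph{left} border, and since ${\mathbf 0}$ appears on the left border in clockwise order, the relevant entry is the last entry of ${\mathbf 0}$, i.e. a $2$ (assuming $n>b$; if $n=b$ it is a $1$, and if also $b=a$ it is a $0$). Hmm, this does not immediately match the hypothesis of Lemma~\ref{lem:idrow}, which allows $x$ to be any simple label occurring in ${\mathbf 0}$, so it is fine.

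Here is the cleaner formulation of the induction. First I would establish the existence direction: given $v$, I build the puzzle from the top down. The top vertex is a single edge, and the unique length-$1$ "identity string" there is forced; then, reading $v$ from one end, at each stage I have an identity string ${\mathbf 0}^{(k)}$ on the current top border and I apply Lemma~\ref{lem:idrow} with $x$ equal to the appropriate entry of $v$ to adjoin one more row at the bottom, which is uniquely determined and has a new shorter identity string ${\mathbf 0}^{(k+1)}$ obtained by deleting that copy of $x$. After $n$ steps the bottom border has been filled with the labels of $v$ (in the claimed direction) and the left border reads ${\mathbf 0}$, since at each step the left label $x$ we fed into the lemma is precisely the entry of $v$ that we are processing, and these entries, collected in order, are a reordering... no, wait: the left border of the whole triangle is the concatenation of the left labels $x$ over all the rows, and I need that concatenation to be ${\mathbf 0}=(0^a,1^{b-a},2^{n-b})$ in clockwise order, while the bottom border should be $v$. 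So the correct bookkeeping is: I peel off rows starting from the \emph{bottom}, the left label of the bottom row must be the last entry of ${\mathbf 0}$, and the identity string ${\mathbf 0}'$ on the top border of that row is ${\mathbf 0}$ with that last entry removed, which is again an identity string; meanwhile the bottom labels of that row are some string $v$, and by Lemma~\ref{lem:idrow} the right border of that row equals its left label. For the induction to close I should therefore induct on $n$ with the statement: for every identity string ${\mathbf 0}$ of length $n$ and every 012-string $v$ of length $n$ with the same content, there is a unique puzzle with left border ${\mathbf 0}$ and right border $v$. The base case $n=0$ is trivial. For the inductive step, Lemma~\ref{lem:idrow} forces the bottom row uniquely once we know $x$; but $x$ is the last label of ${\mathbf 0}$ on the left, so it is determined; the bottom row then has a determined top identity string ${\mathbf 0}'$ and a determined right label $x$, hence the remainder of the puzzle is a puzzle with left border ${\mathbf 0}'$ and right border $v'$, where $v'$ is $v$ with its last entry deleted and that last entry must equal $x$ — and here I must check that $v$'s last entry is forced to be $x$: indeed the bottom-right corner edge of the triangle is the last edge of both the right border and the bottom border read in the two directions, and the bottom row's right label is $x$, so $v$ ends in $x$. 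Now apply the inductive hypothesis to ${\mathbf 0}'$ and $v'$ (which have the same content since removing one $x$ from each preserves the equality of contents) to conclude existence and uniqueness of the rest of the puzzle.

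The main obstacle I anticipate is purely bookkeeping: getting the orientations and the clockwise/counter-clockwise conventions exactly right so that the induction genuinely matches Lemma~\ref{lem:idrow} as stated (the lemma has bottom labels ${\mathbf 0}$ in right-to-left order and produces right label $=$ left label), and verifying at each step that the strings involved are genuinely identity strings of the correct content. There is no hard combinatorics left once Lemma~\ref{lem:idrow} is in hand — the lemma does all the work of classifying which pieces can appear in a row over an identity string. The remaining subtlety is the very first claim of Proposition~\ref{prop:key}, namely $C^w_{{\mathbf 0},u}=\delta_{u,w}$: this is immediate from Corollary~\ref{cor:idpuz}, because a puzzle counted by $C^w_{{\mathbf 0},u}$ has left border ${\mathbf 0}$, right border $u$, and bottom border $w$; the corollary says there is exactly one puzzle with left border ${\mathbf 0}$ and right border $u$, and its bottom border is $u$ read counter-clockwise, i.e. $w$ must equal $u$ (as the puzzle rule reads $w$ counter-clockwise), so the count is $1$ if $w=u$ and $0$ otherwise.
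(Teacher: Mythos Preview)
Your argument has a genuine gap: you are applying Lemma~\ref{lem:idrow} to the \emph{horizontal} rows of the triangle, but the hypothesis of that lemma places the identity string $\mathbf 0$ on the \emph{bottom} of the row-shape. In the triangle of Corollary~\ref{cor:idpuz} the identity string sits on the \emph{left border}; no horizontal segment of the puzzle is an identity string in general. Concretely, if $v_1\neq\mathbf 0_n$ then the single top triangle must carry a composed label on its bottom edge (e.g.\ for $\mathbf 0=(0,2)$ and $v=(0,2)$ the top piece is the $2$--$0$--$5$ triangle and the interior horizontal label is~$5$). So your inductive claim that ``the top border of that row is an identity string $\mathbf 0'$'' is false, and Lemma~\ref{lem:idrow} simply does not apply to the horizontal bottom row. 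Your orientation bookkeeping is also off: with the clockwise convention the bottommost left-border edge carries $\mathbf 0_1$, not the last entry of $\mathbf 0$; but this is secondary to the structural problem.

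The paper's one-line proof fixes exactly this: it uses the $120^\circ$ clockwise rotation of Lemma~\ref{lem:idrow}, so that the identity string lies along the \emph{left} side of a diagonal strip rather than along the bottom of a horizontal row. Peeling off the leftmost diagonal strip then matches the lemma's hypothesis verbatim: the long side is the left border $\mathbf 0$, the given end label is the top entry $v_1$ of the right border, and the lemma forces the strip uniquely with the other end also equal to $v_1$ (contributing the leftmost bottom label) and interior diagonal equal to $\mathbf 0'$, an identity string of length $n-1$. Induction on $n$ then finishes exactly as you intended. Your derivation of $C^w_{\mathbf 0,u}=\delta_{u,w}$ from the corollary at the end is correct.
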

\begin{proof}
  This follows by induction on the number of rows, using the 120
  degree clockwise rotation of Lemma~\ref{lem:idrow}.
\end{proof}

For technical reasons it is convenient to express the constants
$C^w_{u,v}$ in terms of puzzles of rhombus shape.  We will use this
interpretation in the proof of the second identity of
Proposition~\ref{prop:key}.

\begin{cor}\label{cor:rhombus}
  The constant $C^{w}_{u,v}$ is equal to the number of puzzles of the
  following rhombus shape, with top border $u$, right border ${\mathbf
    0}$, bottom border $v$, and left border $w$, with $u$, ${\mathbf
    0}$, and $v$ in clockwise direction and $w$ in counter-clockwise
  direction.
  \[
    \psfrag{l}{$\!\!\!\!\!\!\!w$}
    \psfrag{r}{$\ \ {\mathbf 0}$}
    \psfrag{t}{\raisebox{3mm}{$u$}}
    \psfrag{b}{\raisebox{-2.5mm}{$v$}}
    \pic{.6}{rhompuz}
  \]
\end{cor}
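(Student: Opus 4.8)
The plan is to convert a triangular puzzle with border data $(u,v,w)$ into a rhombus-shaped puzzle with border data $(u,\mathbf 0,v,w)$ by gluing the triangular puzzle together with the special triangular puzzle supplied by Corollary~\ref{cor:idpuz}, and to check that this gluing operation is a bijection between the two sets of puzzles. First I would recall the set-up: by the definition of $C^w_{u,v}$ in Section~\ref{sec:strategy}, such a constant counts triangular puzzles with $u$ on the left border, $v$ on the right border, and $w$ on the bottom border (with $u,v$ clockwise and $w$ counter-clockwise). On the other side, Corollary~\ref{cor:idpuz} provides, for the given $012$-string $v$, a \emph{unique} triangular puzzle $P_0$ whose left border is $\mathbf 0$, whose right border is $v$, and whose bottom border is $v$ (all as specified there). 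The idea is that reflecting $P_0$ appropriately and attaching it along the edge labelled $v$ turns any triangular puzzle counted by $C^w_{u,v}$ into a rhombus of the stated shape, and conversely every such rhombus decomposes uniquely in this way.

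The key steps, in order, are as follows. (1) Describe the gluing precisely: given a triangular puzzle $P$ counted by $C^w_{u,v}$, its right border carries the string $v$; the puzzle $P_0$ of Corollary~\ref{cor:idpuz} has a border carrying $v$ as well. Glue $P$ and a suitably rotated/reflected copy of $P_0$ along this common $v$-edge. Since puzzle pieces may be rotated but not reflected, I would be careful to orient the picture so that both puzzles, as drawn inside the rhombus, use only rotations of the eight puzzle pieces; the rhombus in the statement is exactly the shape obtained, with top border $u$ (from $P$), left border $w$ (from $P$), bottom border $v$ (from $P_0$), and right border $\mathbf 0$ (from $P_0$). One must verify that the labels match along the glued diagonal — they do, because both restrict to $v$ there — so the result is a genuine rhombus puzzle with the asserted border labels. (2) Conversely, given a rhombus puzzle $Q$ of the stated shape, the diagonal separating it into two triangles carries some $012$-string; I would argue that this diagonal string is forced to be $v$. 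This is where the uniqueness clause of Corollary~\ref{cor:idpuz} does the work: the lower-right triangle of $Q$ has right border $\mathbf 0$ and bottom border $v$, hence by (the rotated form of) Corollary~\ref{cor:idpuz} it is the unique such puzzle and its remaining border is $v$; thus the diagonal of $Q$ carries $v$, and the upper-left triangle of $Q$ is then a triangular puzzle with left border $w$, top border $u$, and diagonal border $v$ — i.e.\ exactly a puzzle counted by $C^w_{u,v}$ after the appropriate rotation. (3) Check that the two maps in (1) and (2) are mutually inverse, which is immediate once the orientations are fixed, since in both directions the lower-right triangle is pinned down by Corollary~\ref{cor:idpuz} and the upper-left triangle is simply carried along.

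The main obstacle I expect is purely bookkeeping: getting the rotations and the clockwise/counter-clockwise conventions consistent so that (a) no piece needs to be reflected, and (b) the border labels end up in the orientation claimed in the statement ($u,\mathbf 0,v$ clockwise, $w$ counter-clockwise). The substantive mathematical content — that the bottom-right half of the rhombus is uniquely determined and contributes the factor $\delta$ needed to make the counts agree — is entirely furnished by Corollary~\ref{cor:idpuz}, so once the picture is set up correctly the bijection is essentially tautological. I would therefore spend most of the write-up carefully labelling the diagonal and invoking Corollary~\ref{cor:idpuz} in its $120^\circ$-rotated form, exactly as in the proof of that corollary, and then state that the gluing/cutting along the diagonal is the desired bijection.
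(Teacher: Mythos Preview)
Your proposal is correct and follows essentially the same approach as the paper: split the rhombus along its main diagonal, use Corollary~\ref{cor:idpuz} (in rotated form) to show the lower-right triangle is uniquely determined with diagonal labels $v$, and identify the upper-left triangle with a triangular puzzle counted by $C^w_{u,v}$. One small caution: you mention ``reflecting $P_0$'' in your opening paragraph before correctly noting that only rotations are permitted---in the actual write-up make sure you use only a rotation, as the paper does.
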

\begin{proof}
  Any such rhombus shaped puzzle consists of the (rotated) unique
  puzzle from Corollary~\ref{cor:idpuz} in the lower-right half and a
  (rotated) triangular puzzle with border labels $u$, $v$, and $w$ in
  the upper-left half.
\end{proof}

Let $u$, $v$, and $w$ be 012-strings for $X$ and let $p \in [0,n-b]$.
To prove the second identity of Proposition~\ref{prop:key}, it
suffices to construct a bijection between the set of rhombus shaped
puzzles with border labels $u',{\mathbf 0},v,w$ such that $u
\xrightarrow{p} u'$, and the set of rhombus shaped puzzles with border
labels $u,{\mathbf 0},v',w$ such that $v \xrightarrow{p} v'$.\medskip
\[
  {
    \psfrag{l}{$\!\!\!\!\!\!\!w$}
    \psfrag{r}{$\ \ {\mathbf 0}$}
    \psfrag{t}{\raisebox{2.7mm}{$\!\!\!u'$}}
    \psfrag{b}{\raisebox{-2.2mm}{$v$}}
    \pic{.6}{rhompuz}
  }
  \raisebox{8mm}{\ \ \ \ $\longleftrightarrow$\ \ \ \ }
  {
    \psfrag{l}{$\!\!\!\!\!\!\!w$}
    \psfrag{r}{$\ \ {\mathbf 0}$}
    \psfrag{t}{\raisebox{2.7mm}{$\!\!\!u$}}
    \psfrag{b}{\raisebox{-2.2mm}{$v'$}}
    \pic{.6}{rhompuz}
  }
\]
We will construct a more general bijection where the top and bottom
borders are not required to have simple labels.  The advantage of this
is that we can restrict our attention to puzzles with a single row.
The first ingredient in our construction is an appropriate
generalization of the Pieri relation $u \xra{p} v$ for strings of
arbitrary labels.  This is the subject of the next section.

\section{A Pieri rule for label strings}\label{sec:pieri}

Define a {\em label string\/} to be any finite sequence $u =
(u_1,u_2,\dots,u_\ell)$ of integers from the set $[0,7] =
\{0,1,2,3,4,5,6,7\}$.  These strings are generalizations of the
012-strings that represent Schubert classes on two-step flag
varieties.  In this section we introduce a generalization of the Pieri
relation $u \xra{p} v$ that has meaning when $u$ and $v$ are arbitrary
label strings of the same length.

We start by defining the basic relation $u \xra{1} v$.  This relation
implies that $v$ is obtained by changing exactly two entries of $u$.
There are 15 possible rules for how the entries can be changed, and in
each case there are restrictions on which entries can appear between
the entries being changed.  Each rule is determined by a triple
$((a_1,b_1), S, (a_2,b_2))$ where $a_1,b_1,a_2,b_2 \in [0,7]$ and $S
\subset [0,7]$.  The corresponding rule says that, if $u$ contains a
substring consisting of $a_1$ followed by any number of integers from
$S$ and ending in $a_2$, then one may replace $a_1$ in the substring
with $b_1$ and simultaneously replace $a_2$ with $b_2$.  We will use
the following graphical representation of the rule:
\[
\psfrag{1}{\raisebox{1.5mm}{$a_1$}}
\psfrag{5}{\raisebox{1.5mm}{$a_2$}}
\psfrag{2}{\raisebox{-1mm}{$b_1$}}
\psfrag{7}{\raisebox{-1mm}{$b_2$}}
\psfrag{03*}{\ \,\raisebox{.5mm}{$S*$}}
\pic{.75}{gash_4}
\]
The set $S$ is specified by listing its elements followed by a star to
indicate that its elements can be repeated.  If $S$ is empty, then the
middle third of the line segment is omitted.  The complete list of
rules is given in Table~\ref{tab:pierirel}.  These rules are organized
into six {\em types\/} called A, B, C, D, E, F, (these have no relation
to the classification of types in Lie theory!).
Notice that just two of the rules relate $012$-strings, and these
reproduce the definition of $u \xra{1} v$ from section~\ref{sec:strategy};
the remaining rules follow a similar pattern.  As we will see in
section~\ref{sec:gashes}, the complete set of rules
defining $u \xra{1} v$
arises as a subset of the gashes that can occur in propagation algorithm.

\begin{defn}\label{def:pieri1}
  Let $u$ and $v$ be label strings of the same length.  Then the
  relation $u \xra{1} v$ holds if and only if $v$ can be obtained from
  $u$ by using one of the rules in Table~\ref{tab:pierirel}.  In this
  case we say that the relation $u \xra{1} v$ has {\em index\/}
  $(i,j)$, where $i<j$ are the unique integers such that $u_i \neq
  v_i$ and $u_j \neq v_j$.
\end{defn}

\begin{table}
\begin{tabular}{|c|c|}
\hline
Type & Rule \\
\hline\hline
A & \pic{.75}{gash_0} \\
\hline
B & \pic{.75}{gash_1} \\
\hline
C & \pic{.75}{gash_2} \\
\hline
D & \pic{.75}{gash_3} \\
  & \pic{.75}{gash_4} \\
  & \pic{.75}{gash_5} \\
  & \pic{.75}{gash_6} \\
\hline
E & \pic{.75}{gash_7} \\
  & \pic{.75}{gash_8} \\
  & \pic{.75}{gash_9} \\
  & \pic{.75}{gash_10} \\
\hline
F & \pic{.75}{gash_11} \\
  & \pic{.75}{gash_12} \\
  & \pic{.75}{gash_13} \\
  & \pic{.75}{gash_14} \\
\hline
\end{tabular}
\medskip
\caption{Rules for the Pieri relation on label strings.}
\label{tab:pierirel}
\end{table}

\begin{example}
  According to the second rule of type D, we have $72{\mathbf
    1}3033{\mathbf 5}644 \xra{1} 72{\mathbf 2}3033{\mathbf 7}644$, and
  this relation has index $(3,8)$.
\end{example}

\begin{defn}\label{def:pierip}
  Let $u$ and $v$ be label strings of the same length.  A {\em Pieri
    chain\/} from $u$ to $v$ is a sequence $u = u^0 \xra{1} u^1
  \xra{1} \cdots \xra{1} u^p = v$ such that, if $u^{t-1} \xra{1} u^t$
  has index $(i_t,j_t)$ for each $t$, then $i_s < j_t$ whenever $s
  \leq t$.  The Pieri chain is {\em right-increasing\/} if it
  satisfies the stronger condition $j_1 < j_2 < \dots < j_p$.  We will
  write $u \xra{p} v$ if there exists a Pieri chain of length $p$ from
  $u$ to $v$.
\end{defn}

\begin{example}\label{exm:rightpieri}
  We have $04730202245 \xra{5} 40720522015$, with (right-increasing)
  Pieri chain $04730202245 \xra{1} {\mathbf{40}}730202245 \xra{1}
  407{\mathbf 2}0{\mathbf 3}02245 \xra{1} 407203{\mathbf{20}}245
  \xra{1} 4072032{\mathbf{20}}45 \xra{1} 40720{\mathbf 5}220{\mathbf
    1}5$.
\end{example}

Notice that some swaps of integers in a Pieri chain can happen inside
others: for instance, in example~\ref{exm:rightpieri} the fourth
swap (at position $(8,9)$) happens inside the fifth (at position
$(6,10)$).  In section~\ref{sec:general}, 
this property will be utilized to allow propagation paths to
cross each other in a controlled way.  The two results below are
essential for this application.  Notice also that the definitions
imply that $u \xra{p} v$ if and only if $v^\vee \xra{p} u^\vee$, where
$u^\vee$ denotes the label string $u$ in reverse order.

\begin{lemma}\label{lem:chainorder}
  Let $u \xra{1} v \xra{1} w$ be a Pieri chain, where $u \xra{1} v$
  has index $(i,j)$ and $v \xra{1} w$ has index $(k,l)$.  Assume that
  $k < j$.  Then we have either $i < k < l < j$, $u \xra{1} v$ has
  type E, and $v \xra{1} w$ has type A, or $k < i < j < l$, $u \xra{1}
  v$ has type A, and $v \xra{1} w$ has type E.  Furthermore, there
  exists a unique label string $v'$ such that $u \xra{1} v' \xra{1} w$
  is a Pieri chain with the inequalities and types interchanged.
\end{lemma}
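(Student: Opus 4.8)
The plan is to prove the statement by a case analysis on the ordered pair of types of the two relations $u \xra{1} v$ and $v \xra{1} w$, exploiting the fact that each of the rules in Table~\ref{tab:pierirel} is local: it changes only the two entries at the index positions and restricts, via its set $S$, only the entries strictly between them. I would record at the outset the two elementary consequences of this. First, since $v$ and $u$ differ only in positions $i$ and $j$, the rule producing $w$ from $v$ interacts with the first modification only where the interval $[k,l]$ meets $\{i,j\}$. Second, the inequalities of Definition~\ref{def:pierip}, together with the hypothesis relating $k$ and $j$, constrain the intervals $[i,j]$ and $[k,l]$ to overlap.

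With this in hand I would enumerate the possible relative positions of $[i,j]$ and $[k,l]$ and eliminate all but the two strict nestings $i<k<l<j$ and $k<i<j<l$. The elimination is by label bookkeeping: in any configuration in which the intervals share an endpoint or cross without nesting, one checks against Table~\ref{tab:pierirel} that the label sitting at the shared or crossed position cannot simultaneously play the role demanded of it by both rules---either as an interior element of some $S$, or as one of the changing endpoint labels $a_1,a_2,b_1,b_2$---both before and after the first modification. In each surviving nested configuration one then reads off the types: for $i<k<l<j$ the outer rule must admit, as interior labels, the endpoint labels of the inner rule both before and after its change, and inspection of the table shows the only such pair is ``outer of type E, inner of type A'' with the labels matching up; the case $k<i<j<l$ is the mirror image, with the types A and E interchanged. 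I expect this to be the main obstacle: it is a finite but sizeable check over the fifteen rules, and one must be disciplined about tracking, in the overlap region, exactly which labels are permitted by each rule at each stage.

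It remains to produce $v'$ and to argue uniqueness. Given one of the two configurations, I would define $v'$ by carrying out the same two modifications in the opposite nesting order---in the case $i<k<l<j$, first the type-A change at positions $k,l$ applied to $u$, then the type-E change at positions $i,j$, and in the case $k<i<j<l$ the reverse. The label compatibilities established in the previous step guarantee that each intermediate string lies in the domain of the rule applied to it, so $u \xra{1} v' \xra{1} w$ is again a chain of the two given types, with the nesting---hence the inequalities and the order in which the two types occur---reversed; the required Pieri-chain inequality holds because the first of the two index intervals is now the inner one, whose left endpoint is strictly below the right endpoint of the outer interval. Uniqueness is then immediate, since the reversed configuration dictates which two positions $v'$ may differ from $u$ in, and Table~\ref{tab:pierirel} then dictates the entries of $v'$ at those positions.
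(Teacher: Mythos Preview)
Your proposal is correct and follows the same strategy as the paper: enumerate the possible relative positions of $[i,j]$ and $[k,l]$, use the label data from Table~\ref{tab:pierirel} to eliminate the non-nested (and coincident-endpoint) configurations, and in the two nested cases read off the types and build $v'$ by performing the two rules in the opposite order.

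The paper's execution is sharper than the type-pair case analysis you outline. Rather than checking the fifteen rules (or thirty-six ordered type pairs) individually, the paper records once that every rule $((a_1,b_1),S,(a_2,b_2))$ in Table~\ref{tab:pierirel} satisfies $a_1,b_2 \in \{0,1,3,7\}$, $a_2,b_1 \in \{2,4,5,6\}$, and $S \subset \{0,2,3,4\}$. Writing the two rules as $((a_1,b_1),S,(a_2,b_2))$ and $((c_1,d_1),T,(c_2,d_2))$, this gives $v_i,v_l \in \{2,4,5,6\}$ and $v_j,v_k \in \{0,1,3,7\}$, so $i,j,k,l$ are automatically pairwise distinct; together with the Pieri-chain inequalities and the hypothesis this leaves exactly four orderings. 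The two crossing cases then die in a line each: for instance if $i<k<j<l$ then $c_1 \in S$ and $b_2 \in T$ force $b_2,c_1 \in \{0,1,3,7\} \cap \{0,2,3,4\} = \{0,3\}$, which already excludes types A, C, D, F for both rules; the remaining types force $b_2=c_1=3$ while $S,T \subset \{0,2\}$, a contradiction. This collapses your ``finite but sizeable check'' to a handful of lines. Your treatment of the nested cases and of the construction and uniqueness of $v'$ matches the paper.
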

\begin{proof}
  Assume that $u \xra{1} v$ follows the rule $((a_1,b_1),S,(a_2,b_2))$
  and that $v \xra{1} w$ follows the rule $((c_1,d_1),T,(c_2,d_2))$,
  both of which come from Table~\ref{tab:pierirel}:
  \[
  {
    \psfrag{1}{\raisebox{1.5mm}{$a_1$}}
    \psfrag{5}{\raisebox{1.5mm}{$a_2$}}
    \psfrag{2}{\raisebox{-1mm}{$b_1$}}
    \psfrag{7}{\raisebox{-1mm}{$b_2$}}
    \psfrag{03*}{\ \,\raisebox{.5mm}{$S*$}}
    \pic{.75}{gash_4}
  }
  \text{\ \ \ \ \ and\ \ \ \ \ }
  {
    \psfrag{1}{\raisebox{1.5mm}{$c_1$}}
    \psfrag{5}{\raisebox{1.5mm}{$c_2$}}
    \psfrag{2}{\raisebox{-1mm}{$d_1$}}
    \psfrag{7}{\raisebox{-1mm}{$d_2$}}
    \psfrag{03*}{\ \,\raisebox{.5mm}{$T*$}}
    \pic{.75}{gash_4}
  }
  \]
  Then we have $v_i=b_1$, $v_j=b_2$, $v_k=c_1$, $v_l=c_2$, $v_s\in S$
  for $i<s<j$, and $v_s\in T$ for $k<s<l$.  By inspection of
  Table~\ref{tab:pierirel} we have $b_1,c_2 \in \{2,4,5,6\}$, $b_2,c_1
  \in \{0,1,3,7\}$, and $S \cup T \subset \{0,2,3,4\}$.  It follows
  that $i,j,k,l$ are pairwise distinct integers, and the inequalities
  $i<j$, $k<l$, $i<l$, $k<j$ allow exactly four possibilities for
  their relative orderings.  We consider these possibilities in turn.

  Case 1: Assume that $i<k<j<l$.  Then $c_1\in S$ and $b_2 \in T$.
  Using that $b_2,c_1 \in \{0,1,3,7\} \cap \{0,2,3,4\} = \{0,3\}$, we
  deduce that both of the applied rules are not of type A, C, D, or F.
  This in turn implies that $c_1=b_2=3$ and $S\cup T \subset \{0,2\}$,
  a contradiction.

  Case 2: Assume that $k<i<l<j$.  This case is impossible by an
  argument similar to Case 1.


  Cases 3 and 4: Assume that $i<k<l<j$ or $k<i<j<l$.  Then $c_1,c_2
  \in S$ or $b_1,b_2 \in T$, which implies that the types of the rules
  are as stated in the lemma.  In both cases the label string $v'$ is
  obtained by setting $v'_k=d_1$, $v'_l=d_2$, and $v'_s=u_s$ for $s
  \neq k,l$.
\end{proof}

\begin{cor}\label{cor:label012}
  Let $u$ and $v$ be label strings of the same length and let $p \in
  \N$.

  \begin{itemize}
  \item[(a)] Assume that $u \xra{p} v$.  Then $u$ is a 012-string if
    and only if $v$ is a 012-string.

  \item[(b)] Assume that both $u$ and $v$ are 012-strings.  Then the
    relation $u \xra{p} v$ of the Pieri rule (\ref{eqn:pieri}) holds
    if and only if $u \xra{p} v$ holds in the sense of label strings.
  \end{itemize}
\end{cor}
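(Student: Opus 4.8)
The plan is to derive both parts from a finite inspection of Table~\ref{tab:pierirel}, the value restrictions already recorded in the proof of Lemma~\ref{lem:chainorder}, and the reversal symmetry $u \xra{p} v \Leftrightarrow v^\vee \xra{p} u^\vee$. Recall from that proof that every rule $((a_1,b_1),S,(a_2,b_2))$ of Table~\ref{tab:pierirel} satisfies $a_1,b_2 \in \{0,1,3,7\}$, $b_1,a_2 \in \{2,4,5,6\}$, and $S \subseteq \{0,2,3,4\}$. Hence, if $u$ is a 012-string and $u \xra{1} v$ is obtained from a rule at index $(i,j)$, then $u_i = a_1 \in \{0,1,2\}\cap\{0,1,3,7\} = \{0,1\}$ and $u_j = a_2 \in \{0,1,2\}\cap\{2,4,5,6\} = \{2\}$, so only rules with $a_1 \in \{0,1\}$ and $a_2 = 2$ apply. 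Running through this short list, one checks that each such rule has $b_1 = 2$ and $b_2 = a_1$, and that $S \cap \{0,1,2\}$ is empty when $a_1 = 0$ and is contained in $\{0\}$ when $a_1 = 1$; so the rule, applied to a 012-string, transposes a value in $\{0,1\}$ with a $2$ across a (possibly empty) block of $0$'s, i.e.\ it performs exactly one of the steps $\ldots 02\ldots\to\ldots 20\ldots$ or $\ldots 1 0^k 2\ldots\to\ldots 2 0^k 1\ldots$ of the relation defined just before~(\ref{eqn:pieri}). Conversely, every such classical step out of a 012-string is realized by a rule of Table~\ref{tab:pierirel}. Thus, on 012-strings, the single-step relations $\xra{1}$ in the two senses coincide, and each preserves the multiset of entries.

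Part~(a) follows by writing a Pieri chain as $u \xra{1} u^1 \xra{p-1} v$ and inducting on $p$: $u$ a 012-string forces $u^1$, hence every $u^t$, hence $v$ to be a 012-string; and the reverse implication is obtained by applying this to $v^\vee \xra{p} u^\vee$, using that reversal sends 012-strings to 012-strings. For part~(b), fix 012-strings $u,v$. By the first paragraph the two versions of $u \xra{p} v$ differ only in the chain condition --- ``$j_t \le i_{t+1}$ for all $t$'' in~(\ref{eqn:pieri}) versus ``$i_s < j_t$ whenever $s \le t$'' in Definition~\ref{def:pierip}. The first condition plainly implies the second, so~(\ref{eqn:pieri}) implies the label-string relation. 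For the converse, take a Pieri chain $u = u^0 \xra{1}\cdots\xra{1} u^p = v$ of label strings; by part~(a) every $u^t$ is a 012-string and every step is a step of~(\ref{eqn:pieri}), whose interior (strictly between the two transposed positions) is empty or a block of $0$'s. We claim $j_t \le i_{t+1}$ for all $t$, which realizes the chain as an instance of~(\ref{eqn:pieri}). Indeed, the subchain $u^{t-1}\xra{1} u^t \xra{1} u^{t+1}$ is itself a Pieri chain since $i_t < j_{t+1}$, so if $i_{t+1} < j_t$ then Lemma~\ref{lem:chainorder} puts the two steps in a nested configuration with one of type~A and the other of type~E. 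The nesting places the two endpoint values of the type-A step among the interior entries of the type-E step; but a step of~(\ref{eqn:pieri}) always has a $2$ among its two endpoint values, whereas the interior of the type-E step consists of $0$'s only --- a contradiction. Hence $i_{t+1}\ge j_t$ for every $t$.

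The only non-formal ingredient is the case analysis in the first paragraph: one must verify, rule by rule, that every entry of Table~\ref{tab:pierirel} with $a_1 \in \{0,1\}$ and $a_2 = 2$ collapses on 012-strings to a single classical Pieri step --- in particular that no such rule places a $2$ in its set $S$ and that the rules with $a_1 = 0$ have $S \cap \{0,1,2\} = \emptyset$. This is where the concrete list of puzzle pieces (equivalently, the readings $3 = 10$, $4 = 21$, $5 = 20$, $6 = 2(10)$, $7 = (21)0$) is used, and it is the step most likely to conceal a subtlety; the rest is bookkeeping with the two chain conditions together with a direct appeal to Lemma~\ref{lem:chainorder}.
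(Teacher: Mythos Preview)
Your proof is correct and takes essentially the same approach as the paper: inspect Table~\ref{tab:pierirel} to see that on 012-strings only the type~A rule and the first type~D rule can fire (so the single-step relations coincide with the classical ones), then invoke Lemma~\ref{lem:chainorder} to recover the stronger chain condition $j_t \le i_{t+1}$. The paper's final step is slightly cleaner --- it simply notes that since no step on a 012-string has type~E, the nested A/E configuration forced by Lemma~\ref{lem:chainorder} is impossible --- whereas your endpoint-value argument is a valid but redundant elaboration of that same contradiction.
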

\begin{proof}
  Only the rule of type A and the first rule of type D in
  Table~\ref{tab:pierirel} can be applied to a 012-string, and they
  will replace such a string with a new 012-string.  Part (a) and the
  special case of (b) in which $p=1$ follow from this.  Let $u = u^0
  \xra{1} u^1 \xra{1} \cdots \xra{1} u^p = v$ be a Pieri chain such
  that $u^{t-1} \xra{1} u^t$ has index $(i_t,j_t)$ for each $t$.
  Since no relation $u^{t-1} \xra{1} u^t$ has type E, it follows from
  Lemma~\ref{lem:chainorder} that $j_t \leq i_{t+1}$ for each $t$.
  The general case of part (b) follows from this.
\end{proof}

\begin{prop}\label{prop:rightincr}\mbox{}

  \begin{itemize}
  \item[(a)] Let $u$ and $v$ be label strings with $u \xra{p} v$.
    Then there exists a unique right-increasing Pieri chain from $u$
    to $v$.
    
  \item[(b)] Let $u = u^0 \xra{1} u^1 \xra{1} \cdots \xra{1} u^p = v$
    be any Pieri chain from $u$ to $v$, and let $(i_t,j_t)$ be the
    index of $u^{t-1} \xra{1} u^t$ for each $t \in [1,p]$.  If $j_1 =
    \min\{j_1,\dots,j_p\}$, then $u^1$ belongs to the unique
    right-increasing Pieri chain from $u$ to $v$.
  \end{itemize}
\end{prop}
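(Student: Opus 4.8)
The plan is to prove parts (a) and (b) together by exploiting Lemma~\ref{lem:chainorder} to reorder consecutive moves of a Pieri chain, combined with induction on $p$. The first step is to extract a ``reroute'' operation. Suppose a Pieri chain $u = u^0 \xra{1} \cdots \xra{1} u^p = v$ has a consecutive pair $u^{t-1} \xra{1} u^t \xra{1} u^{t+1}$ whose indices $(i_t,j_t)$ and $(i_{t+1},j_{t+1})$ satisfy $j_t > j_{t+1}$ (a \emph{descent}). Then this pair is of the type treated in Lemma~\ref{lem:chainorder} --- one is in the situation $i_t < i_{t+1} < j_{t+1} < j_t$, with the first move of type E and the second of type A --- and the lemma provides $(u^t)'$ for which $u^{t-1} \xra{1} (u^t)' \xra{1} u^{t+1}$ is a Pieri chain in which the two moves occur in the opposite order, and, as the formula for $v'$ in the proof of that lemma shows, with their rules and indices unchanged. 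I would then check, from the defining inequalities $i_s < j_r$ ($s \le r$) of a Pieri chain, that substituting $(u^t)'$ for $u^t$ in the full chain again gives a Pieri chain, and that this strictly decreases the number of pairs $s < r$ with $j_s > j_r$. I would also record two consequences of Lemma~\ref{lem:chainorder}: all right-indices $j_1, \dots, j_p$ of a Pieri chain are pairwise distinct (this holds for consecutive pairs by the lemma, hence for all pairs since a reroute only permutes them), so that a Pieri chain is right-increasing exactly when it has no descent.

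Existence in (a) then follows by a bubble sort: starting from any Pieri chain from $u$ to $v$ and repeatedly rerouting at a descent, the process terminates by the strict decrease above, and the result is right-increasing. Since each reroute merely transposes two moves (as rule--index pairs), the moves of this chain are exactly those of the original chain listed in increasing order of right-index; call it the sorted chain. For part (b): if $\mathcal{C}$ has $j_1 = \min\{j_1, \dots, j_p\}$, then, the $j_t$ being distinct, $u^0 \xra{1} u^1$ is the unique move of $\mathcal{C}$ with least right-index, so it remains the first move of the sorted chain of $\mathcal{C}$; hence $u^1$ is the first string of that right-increasing chain. All that then remains for (b) is the uniqueness assertion in (a): that this right-increasing chain is the only one.

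Uniqueness is the main obstacle. I would argue by induction on $p$, the case $p \le 1$ being immediate, and reduce to the claim that any two right-increasing chains from $u$ to $v$ have the same \emph{last} move --- one may then strip it off and apply the inductive hypothesis to the two resulting right-increasing chains of length $p-1$. The last move is the tractable one: in a right-increasing chain with last move at $(i_p, j_p)$, every earlier move has both indices smaller than $j_p$, so no position beyond $j_p$ is ever touched and position $j_p$ is touched only by the last move; combining this with the fact, visible in the proof of Lemma~\ref{lem:chainorder}, that any rule leaves a value in $\{2,4,5,6\}$ at its left endpoint and a value in $\{0,1,3,7\}$ at its right endpoint (so the right-endpoint value always changes), one gets that $j_p$ must be the largest position at which $u$ and $v$ disagree, a quantity depending only on $u$ and $v$. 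The genuinely delicate point is to show that the rule applied by the last move and its left endpoint $i_p$ are likewise determined: the rule should be pinned down by its right input--output pair $(u_{j_p}, v_{j_p})$ together with the entries of $v$ strictly between $i_p$ and $j_p$, which requires a case inspection of Table~\ref{tab:pierirel}; and $i_p$ should then be forced by the requirement that reverting the entries at $i_p$ and $j_p$ of $v$ leave a string still joined to $u$ by a right-increasing chain of length $p-1$. Establishing that this singles out a unique $i_p$ --- in particular when the rule's middle set contains the value it places at $i_p$, so that several positions are a priori admissible --- is where I expect the real work to lie, drawing on the structure of Table~\ref{tab:pierirel} and the inductive hypothesis. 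Once the last move is shown to depend only on $u$ and $v$, uniqueness of the whole right-increasing chain follows by induction, completing (a), and with it (b).
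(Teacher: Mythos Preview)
Your approach is essentially the paper's: reroute adjacent out-of-order steps via Lemma~\ref{lem:chainorder} (a bubble sort) to get existence and part (b), then prove uniqueness by showing that the last step of a right-increasing chain is determined by $u$ and $v$ alone and inducting. The existence and (b) arguments match the paper's.

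For uniqueness, however, you are making the identification of the last move harder than it is, and your sketch has a circularity: you propose to pin down the rule using the entries of $v$ strictly between $i_p$ and $j_p$, but you also need the rule to locate $i_p$. The paper dissolves this as follows. With $j$ the largest position where $u$ and $v$ differ, the pair $(a_2,b_2)=(u_j,v_j)$ already determines the \emph{type} of the last rule, and the type determines the middle set $S$ (inspect Table~\ref{tab:pierirel}). Crucially, for every rule in Table~\ref{tab:pierirel} the left output $b_1$ does \emph{not} belong to $S$; so the situation you worry about, ``when the rule's middle set contains the value it places at $i_p$'', never occurs. Since $v_k=u^{p-1}_k\in S$ for $i<k<j$ and $v_i=b_1\notin S$, the left endpoint is simply
\[
i \;=\; \max\{\,k<j : v_k\notin S\,\}.
\]
Then $b_1=v_i$, and the triple $(b_1,a_2,b_2)$ determines the full rule (again by inspection of the table). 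No inductive hypothesis is needed to locate $i_p$ or the rule; the induction is used only after stripping the last step. Once you make this observation, your outline becomes a complete proof identical in substance to the paper's.
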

\begin{proof}
  Let $u = u^0 \xra{1} u^1 \xra{1} \cdots \xra{1} u^p = v$ be any
  Pieri chain, and let $u^{t-1} \xra{1} u^t$ have index $(i_t,j_t)$.
  Assume that $j_{t+1} \leq j_t$  for some $t$.  Then $i_{t+1} < j_t$,
  so Lemma~\ref{lem:chainorder} implies that $i_t < i_{t+1} < j_{t+1}
  < j_t$.  Moreover, there exists a label string $v'$ such that
  $u^{t-1} \xra{1} v'$ has index $(i_{t+1},j_{t+1})$ and $v' \xra{1}
  u^{t+1}$ has index $(i_t,j_t)$.  By replacing $u^t$ with $v'$, we
  obtain a new Pieri chain where the pairs $(i_t,j_t)$ and
  $(i_{t+1},j_{t+1})$ are interchanged.  We repeat this procedure
  until $j_1<j_2<\dots<j_p$.  This shows that there exists at least
  one right-increasing Pieri chain from $u$ to $v$.  If the initial
  Pieri chain satisfies $j_1 = \min\{j_1,\dots,j_p\}$, then the string
  $u^1$ will never be replaced and will remain unchanged in the right
  increasing Pieri chain.

  To see that the right-increasing Pieri chain is uniquely determined
  from $u$ and $v$, assume that the last step $u^{p-1} \xra{1} v$
  follows the rule $((a_1,b_1),S,(a_2,b_2))$, and let $(i,j)$ be the
  index of the last step.  Notice that $j$ is the largest integer for
  which $u_j \neq v_j$, and we have $(a_2,b_2) = (u_j,v_j)$.  This
  pair determines the type of the rule, which in turn determines the
  set $S$.  Now $i$ is the largest integer for which $i<j$ and $v_i
  \notin S$.  We have $b_1 = v_i$, and the entire rule is determined
  by the triple $(b_1,a_2,b_2)$.  We now obtain $u^{p-1}$ by applying
  the inverse rule to $v$, and by induction there exists a unique
  right-increasing Pieri chain from $u$ to $u^{p-1}$.
\end{proof}

\section{Gashes and swap regions}\label{sec:gashes}

In this section we will work with parallelogram shaped puzzles with a single
row, such that the left and right border edges have simple labels.
Such puzzles will be called {\em single-row puzzles}.  We will say
that a single-row puzzle has {\em border\/} $(c_1,u,v,c_2)$ if $u$ is
the string of labels on the top border from left to right, $v$ is the
string of labels on the bottom border from left to right, $c_1$ is the
simple label on the left border, and $c_2$ is the simple label on the
right border.
\[
    \psfrag{l}{$\!\!\!\!\!c_1$}
    \psfrag{r}{\ \ $c_2$}
    \psfrag{t}{\raisebox{3.5mm}{$\!\!u$}}
    \psfrag{b}{\raisebox{-2.4mm}{$v$}}
    \pic{.6}{rhompuz1}
\] 
%
%
Given label strings $u$ and $v'$ of the same length, simple labels
$c_1,c_2 \in \{0,1,2\}$, and an integer $p \geq 0$, we will construct
a bijection between the set of single-row puzzles with border
$(c_1,u',v',c_2)$ for which $u \xra{p} u'$, and the set of single-row
puzzles with border $(c_1,u,v,c_2)$ for which $v \xra{p} v'$.  We
start with the simplest case where $p=1$.

\begin{table}
\begin{tabular}{|c|c|c|c|}
\hline
Type & Left leg & Middle segment & Right leg \\
\hline
\hline
A & \pic{.75}{gash_A02} & & \pic{.75}{gash_A20} \\
\hline
B & \pic{.75}{gash_B32} & \pic{.75}{gash_Bm}  & \pic{.75}{gash_B23} \\
\hline
C & \pic{.75}{gash_C04} & \pic{.75}{gash_Cm}  & \pic{.75}{gash_C40} \\
  & \pic{.75}{gash_C32} &                     & \pic{.75}{gash_C23} \\
\hline
D & \pic{.75}{gash_D12} & \pic{.75}{gash_Dm1} & \pic{.75}{gash_D21} \\
  & \pic{.75}{gash_D75} & \pic{.75}{gash_Dm2} & \pic{.75}{gash_D57} \\
  & \pic{.75}{gash_D42} & \pic{.75}{gash_Dm3} & \pic{.75}{gash_D24} \\
\hline
E & \pic{.75}{gash_E14} & \pic{.75}{gash_Em1} & \pic{.75}{gash_E41} \\
  & \pic{.75}{gash_E35} & \pic{.75}{gash_Em2} & \pic{.75}{gash_E53} \\
  & \pic{.75}{gash_E12} & \pic{.75}{gash_Em3} & \pic{.75}{gash_E21} \\
\hline
F & \pic{.75}{gash_F05} &                     & \pic{.75}{gash_F50} \\
  & \pic{.75}{gash_F16} & \pic{.75}{gash_Fm}  & \pic{.75}{gash_F61} \\
  & \pic{.75}{gash_F02} &                     & \pic{.75}{gash_F20} \\
\hline
\end{tabular}
\medskip
\caption{Gashes allowed in a gashed puzzle.}
\label{tab:gashes}
\end{table}

The bijection is formulated in terms of {\em gashed\/} single-row
puzzles in which some puzzle pieces next to each other do not have
matching labels.  More precisely, a {\em gash\/} is a pair of puzzle
edges with labels on both sides, together with a connected sequence of
edges between them, so that certain conditions are satisfied.  The two
edges with labels on both sides are called the {\em left leg\/} and
the {\em right leg\/} of the gash.  By definition, 
every gash must have one of the
types A, B, C, D, E, F, which correspond to the types of the Pieri
relations in Table~\ref{tab:pierirel}, but since the edges of a gash
need not all be horizontal, the definitions are not identical.
For each type there are a set
of choices for the left leg, the middle segment of edges, and the
right leg.  These choices are listed in Table~\ref{tab:gashes} and may
be combined in any way, as long as the orientation of the edges
remains as shown, and the height of the gash is at most
one, i.e.\ at most one non-horizontal edge may be included.  The
labels of the edges in Table~\ref{tab:gashes} should be understood in
the same way as for the Pieri relation in the previous section.  If a
horizontal edge is labeled with a sequence of numbers followed by a
star, then any number of connected horizontal edges with labels from
the sequence may be included.  A non-horizontal edge labeled with a
sequence of numbers means a single edge whose label is one of these
numbers.  Notice that Table~\ref{tab:pierirel} lists all possible
horizontal gashes.

\begin{example}
  Here are two gashes, one of type D and another of type F.\smallskip
  \[
  \pic{.75}{gashexD}
  \raisebox{5mm}{\text{ \ \ \ \ and \ \ \ \ }}
  \pic{.75}{gashexF}
  \]
\end{example}

Let $P$ be a single-row puzzle with border $(c_1,u',v',c_2)$ such that
$u \xra{1} u'$.  Then the label string $u'$ can be obtained by
interchanging two entries of $u$.  Change the corresponding two edges
on the top border of $P$ to have the entries of $u$ as their top
labels and the entries of $u'$ as their bottom labels.  This creates a
horizontal gash on the top border of $P$, and the resulting gashed
puzzle contains all the information required by the bijection.  We
will formulate the bijection as a transformation rule on gashed
puzzles.  This transformation takes a single-row puzzle with a gash on
the top border and changes it by {\em propagating\/} the gash to the
bottom border.

If a gash is not on the bottom border of its puzzle, then we define
the {\em front edge\/} of the gash as follows.  If the gash is
horizontal on the top border, then the front edge is the left leg.
Otherwise the front edge is the unique non-horizontal edge of the
gash.  A propagation is carried out by one or more steps that move the
front edge to the right.  The labels between the old and new front
edges may be changed in the process, and the type of the gash may
change as well.  The subset of puzzle pieces and edges that are
changed is called a {\em swap region}, and the change itself is called
a {\em swap}.  The result of the bijection is the gashed puzzle
obtained when the gash reaches the bottom border.  Before we give the
complete list of swap regions and the proof that propagations
are well-defined (in section~\ref{sec:propagation}), 
we first consider two examples.

\begin{example}
  Let $u = 0241$, $u' = 2041$, and $v' = 5410$.  If we start with the
  unique single-row puzzle with border $(2,u',v',0)$ and use $u$ to
  introduce a gash on the top border, then this gash is propagated to
  the bottom border by the following sequence of swaps.  Each of these
  swaps is carried out by applying a unique named swap
  region.\smallskip

  \noindent
  \mbox{}\hspace{5mm}
  \pic{.75}{propexB0} \ \raisebox{5mm}{$\mapsto$} \ 
  \pic{.75}{propexB1} \ \raisebox{5mm}{$\mapsto$} \\ 
  \mbox{}\hspace{5mm}
  \pic{.75}{propexB2} \ \raisebox{5mm}{$\mapsto$} \ 
  \pic{.75}{propexB3} 
  \medskip
  
  \noindent
  The swap region that is applied first is called AF.  It has the
  following effect.
  \[
  \text{AF} \ : \ \raisebox{-6mm}{\pic{.75}{swapregbAF}} \ \mapsto \ 
  \raisebox{-6mm}{\pic{.75}{swapregaAF}}
  \]
  The name indicates that a gash of type A is replaced with a gash of
  type F.  A more compact description of this rule is given in the
  following diagram.
  \[
  \text{AF} \ : \ \raisebox{-6mm}{\pic{.75}{swapregAF}}
  \]
  This diagram shows the gashes both before and after the swap.  To
  obtain the region before the swap, one replaces all gashes on the
  bottom and right sides with their outside labels.  The region after
  the swap is obtained by replacing the gashes on the top and left
  sides with their outside labels.  In both cases the labels of the
  inner edges are uniquely determined by requiring that the interior
  of a swap region is a union of puzzle pieces with matching labels.
  The other two swap regions used in the example are called FF11 and
  FF9.
  \[
  \text{FF11} \ : \ \raisebox{-6mm}{\pic{.75}{swapregFF11}}
  \hspace{15mm} ; \hspace{15mm} \text{FF9} \ : \
  \raisebox{-6mm}{\pic{.75}{swapregFF9}}
  \]
\end{example}

\begin{example}
  Let $u=1015$, $u'=1027$, and $v'=2031$, consider the unique
  single-row puzzle with border $(1,u',v',2)$, and use $u$ to create a
  gash on the top border.  In this case the propagation carries out
  the following sequence of swaps.\smallskip
  
  \noindent
  \mbox{}\hspace{5mm}
  \pic{.75}{propexC0} \ \raisebox{5mm}{$\mapsto$} \ 
  \pic{.75}{propexC1} \ \raisebox{5mm}{$\mapsto$} \\ 
  \mbox{}\hspace{5mm}
  \pic{.75}{propexC2} \ \raisebox{5mm}{$\mapsto$} \ 
  \pic{.75}{propexC3} \ \raisebox{5mm}{$\mapsto$} \\ 
  \mbox{}\hspace{5mm}
  \pic{.75}{propexC4}
  \medskip

  \noindent
  This example uses the swap regions DD2, DD11, DD17, and DD7.
  \[
  \begin{split}
    \text{DD2} \ : \ \raisebox{-6mm}{\pic{.75}{swapregDD2}}
    \hspace{15mm} &; \hspace{15mm}
    \text{DD11} \ : \ \raisebox{-6mm}{\pic{.75}{swapregDD11}} \\
    \text{DD17} \ : \ \raisebox{-6mm}{\pic{.75}{swapregDD17}}
    \hspace{15mm} &; \hspace{15mm}
    \text{DD7} \ : \ \raisebox{-6mm}{\pic{.75}{swapregDD7}}
  \end{split}
  \]
  The label $0*$ on the first swap region DD2 indicates that the
  corresponding edges may be repeated any number of times, including
  zero.  The swap region DD11 does not cause any change to the puzzle;
  it simply allows the front edge of a gash to move to the right.
\end{example}

\section{Propagation rules}\label{sec:propagation}

In this section we give the complete list of swap regions required for
carrying out the bijection for $p=1$.  At the same time we prove that
any gash that is not on the bottom border of its puzzle can be moved
to the right by applying a unique swap region.  This establishes that
the list of swap regions gives a well defined map on gashed puzzles.

Each gash type comes with its own set of swap regions.  More
precisely, a swap region may be used only if its name starts with the
type of the gash at hand.  The proof that the list of swap regions is
complete consists of a case-by-case analysis of all possible gashes:
we exhaustively consider all cases for how the puzzle may look 
near the gash, and provide a unique swap region to cover every possibility.
This analysis is organized by gash type and comprises
sections~\ref{sec:srA} through \ref{sec:srF}.  The reader who does not wish
to verify the completeness of the analysis may safely skim these
sections.  In section~\ref{sec:properties}, we record the 
properties of the list that are essential to the proof of 
Proposition~\ref{prop:key}.

In the following we assume that we are given a gashed single-row
puzzle, such that the gash is not located on the bottom border.  We
will identify the unique swap region that must be applied to propagate
the gash.  If the front edge of the gash is not horizontal, then this
edge will be the left side of the swap region.  Similarly, if applying
the swap region results in a new gash that is not on the bottom
border, then the front edge of the new gash is taken to be the right
side of the swap region.  In all cases, 
the reader should
observe that the simplicity of the left and right border labels
implies that the indicated swap regions are completely contained in
the puzzle.

\def\srule#1{\global\def\srname{#1}\srname&\raisebox{-5mm}{\pic{.58}{swapreg\srname}}&\raisebox{-5mm}{\pic{.58}{swapregb\srname}}&\raisebox{-5mm}{\pic{.58}{swaprega\srname}}\\\hline}
  
\subsection{Swap regions for a gash of type A}\label{sec:srA}

Assume that the gash is of type A.  Then it is located on the top
border of the puzzle.  Let $a$ and $b$ be the labels of the edges
going south-west and south-east from the middle node of the gash.
\[
\psfrag{a}{\!\!\!\!$a$}
\psfrag{b}{\ \, $b$}
\pic{.75}{ruleA}
\]
The following table lists all possible values of $a$ and $b$ together
with the unique swap region that can be applied in each case.
Notice that the `Before' and `After' fields indicate only one
particular instance of swap regions that include stretchable edges.

\begin{center}
  \begin{tabular}{|c|c|c|c|c|c|}
    \hline
    $a$ & $b$ & Name & Rule & Before & After \\
    \hline\hline
    0 & 0 & \srule{AA1}
    1 & 0 & \srule{AB}
    1 & 1 & \srule{AD}
    1 & 4 & \srule{AA2}
    2 & 0 & \srule{AF}
    2 & 1 & \srule{AC}
    2 & 2 & \srule{AA3}
    3 & 1 & \srule{AA4}
  \end{tabular}
\end{center}

\subsection{Swap regions for a gash of type B}\label{sec:srB}

Assume now that the gash is of type B, located on the top border of
the puzzle.  Let $a$ and $b$ be the labels of the edges indicated in
the picture.
\[
\psfrag{a}{\ \,$a$}
\psfrag{b}{\!\!$b$}
\pic{.75}{ruleB}
\]
The table lists the possible values of $a$ and $b$ together with the
corresponding swap regions.
\begin{center}
  \begin{tabular}{|c|c|c|c|c|}
    \hline
    $a$ & $b$ & Name & Rule & Before/After \\
    \hline\hline
    0 & 0 & BA & \raisebox{-5mm}{\pic{.58}{swapregBA}} &
    \raisebox{-6mm}{\shortstack{
      Before: \raisebox{-5mm}{\pic{.58}{swapregbBA}} \\
      After:\ \ \ \raisebox{-5mm}{\pic{.58}{swapregaBA}} 
    }}
    \\
    \hline
    1 & 0 & BB1 & \raisebox{-5mm}{\pic{.58}{swapregBB1}} &
    \raisebox{-6mm}{\shortstack{
      Before: \raisebox{-5mm}{\pic{.58}{swapregbBB1}} \\
      After:\ \ \ \raisebox{-5mm}{\pic{.58}{swapregaBB1}}
    }}
    \\
    \hline
    2 & 0 & BE & \raisebox{-5mm}{\pic{.58}{swapregBE}} &
    \raisebox{-6mm}{\shortstack{
      Before: \raisebox{-5mm}{\pic{.58}{swapregbBE}} \\
      After:\ \ \ \raisebox{-5mm}{\pic{.58}{swapregaBE}}
    }}
    \\
    \hline
    2 & 2 & BB2 & \raisebox{-5mm}{\pic{.58}{swapregBB2}} &
    \raisebox{-6mm}{\shortstack{
      Before: \raisebox{-5mm}{\pic{.58}{swapregbBB2}} \\
      After:\ \ \ \raisebox{-5mm}{\pic{.58}{swapregaBB2}}
    }}
    \\
    \hline
  \end{tabular}
\end{center}

\subsection{Swap regions for a gash of type C}\label{sec:srC}

\subsubsection{}
Assume that the gash has type C and is located at the top border of the
puzzle.  Let $a$ be the indicated label.
\[
\psfrag{a}{\, $a$}
\pic{.75}{ruleC1}
\]
The table lists the possible values of $a$ and the corresponding swap
regions.
\begin{center}
  \begin{tabular}{|c|c|c|c|c|}
    \hline
    $a$ & Name & Rule & Before & After \\
    \hline\hline
    0 & \srule{CC1}
    \hline
    2 & \srule{CC2}
    \hline
  \end{tabular}
\end{center}

\subsubsection{}
Assume that the gash has type C with the following shape.  Let $a$ and
$b$ be the labels of the edges going south-east and east from the top
node of the left leg.  Notice that $b$ may be the right leg of the
gash, in which case the value of $b$ is displayed as $\frac{4}{0}$.
\[
\psfrag{a}{\ \ $a$}
\psfrag{b}{$b$}
\pic{.75}{ruleC2}
\]
The table lists the possible values of $a$ and $b$ together with the
corresponding swap regions.
\begin{center}
  \begin{tabular}{|c|c|c|c|c|c|}
    \hline
    $a$ & $b$ & Name & Rule & Before & After \\
    \hline\hline
    0 & $\frac{4}{0}$ & \srule{CF}
    1 & $\frac{4}{0}$ & \srule{CC3}
    2 & $\frac{4}{0}$ & \srule{CA}
    2 & 2 & \srule{CC4}
  \end{tabular}
\end{center}

\subsubsection{}
Assume that the gash has type C with the following shape.  The unique
applicable swap region is determined by the indicated label $a$.
\[
\psfrag{a}{$a$}
\psfrag{b}{$b$}
\pic{.75}{ruleC3}
\]
\begin{center}
  \begin{tabular}{|c|c|c|c|c|}
    \hline
    $a$ & Name & Rule & Before & After \\
    \hline\hline
    0 & \srule{CC5}
    2 & \srule{CC6}
  \end{tabular}
\end{center}

\subsection{Swap regions for a gash of type D}\label{sec:srD}

\subsubsection{}
Assume that the gash has type D and is located on the top border of
the puzzle.  Let $a$ be the labels of the left leg and let $b$ be the
label of the edge going south-west from the right node of the left
leg.
\[
\psfrag{a}{\raisebox{-1.5mm}{$a$}}
\psfrag{b}{\ \,$b$}
\pic{.75}{ruleD1}
\]
The table lists the possible values of $a$ and $b$ and the
corresponding swap regions.
\begin{center}
  \begin{tabular}{|c|c|c|c|c|c|}
    \hline
    $a$ & $b$ & Name & Rule & Before & After \\
    \hline\hline
    $\frac{1}{2}$ & 0 & \srule{DD1}
    $\frac{1}{2}$ & 1 & \srule{DD2}
    $\frac{1}{2}$ & 2 & \srule{DD3}
    $\frac{1}{2}$ & 3 & \srule{DA}
    $\frac{7}{5}$ & 2 & \srule{DD4}
  \end{tabular}
\end{center}

\subsubsection{}
Assume that the gash has type D with the following shape.
\[
\psfrag{a}{\ \ $a$}
\psfrag{b}{$b$}
\pic{.75}{ruleD2}
\]
The unique applicable swap region is determined by the indicated label
$a$.
\begin{center}
  \begin{tabular}{|c|c|c|c|c|}
    \hline
    $a$ & Name & Rule & Before & After \\
    \hline\hline
    0 & \srule{DD5}
    1 & \srule{DE}
    2 & \srule{DD6}
    3 & \srule{DF}
  \end{tabular}
\end{center}

\subsubsection{}
Assume that the gash has type D with the following shape.
\[
\psfrag{a}{$a$}
\psfrag{b}{$b$}
\pic{.75}{ruleD3}
\]
The unique applicable swap region is determined by the indicated label
$a$.
\begin{center}
  \begin{tabular}{|c|c|c|c|c|}
    \hline
    $a$ & Name & Rule & Before & After \\
    \hline\hline
    1 & \srule{DD7}
    7 & \srule{DD8}
  \end{tabular}
\end{center}

\subsubsection{}
Assume that the gash has type D with the following shape.
\[
\psfrag{a}{\!\!\!\!$a$}
\psfrag{b}{\ \ $b$}
\pic{.75}{ruleD4}
\]
The unique applicable swap region is determined by the indicated labels
$a$ and $b$.
\begin{center}
  \begin{tabular}{|c|c|c|c|c|c|}
    \hline
    $a$ & $b$ & Name & Rule & Before & After \\
    \hline\hline
    0 & 0 & \srule{DD9}
    0 & 3 & \srule{DD10}
    1 & 0 & \srule{DD11}
    1 & 1 & \srule{DD12}
    5 & 2 & \srule{DD13}
    6 & 2 & \srule{DD14}
  \end{tabular}
\end{center}

\subsubsection{}
Assume that the gash has type D with the following shape (given by the
solid black lines).
\[
\psfrag{a}{\!\!\!$a$}
\psfrag{b}{\ \,$b$}
\pic{.75}{ruleD5}
\]
The unique applicable swap region is determined by the indicated labels
$a$ and $b$.
\begin{center}
  \begin{tabular}{|c|c|c|c|c|c|}
    \hline
    $a$ & $b$ & Name & Rule & Before & After \\
    \hline\hline
    0 & 0 & \srule{DD15}
    0 & 1 & \srule{DD16}
    0 & 4 & \srule{DD17}
    2 & 4 & \srule{DD18}
    2 & 5 & \srule{DD19}
    2 & 6 & \srule{DD20}
  \end{tabular}
\end{center}

\subsection{Swap regions for a gash of type E}\label{sec:srE}

\subsubsection{}
Assume that the gash has type E and is located on the top border of
the puzzle.  Let $a$ be the labels of the left leg and let $b$ be the
label of the edge going south-west from the right node of the left leg.
\[
\psfrag{a}{\raisebox{-1.5mm}{$a$}}
\psfrag{b}{\ \,$b$}
\pic{.75}{ruleE1}
\]
The table lists the possible values of $a$ and $b$ and the
corresponding swap regions.
\begin{center}
  \begin{tabular}{|c|c|c|c|c|c|}
    \hline
    $a$ & $b$ & Name & Rule & Before & After \\
    \hline\hline
    $\frac{1}{4}$ & 0 & \srule{EE1}
    $\frac{1}{4}$ & 2 & \srule{EE2}
    $\frac{3}{5}$ & 2 & \srule{EE3}
  \end{tabular}
\end{center}

\subsubsection{}
Assume that the gash has type E with the following shape.  Let $a$ be
the label of the edge going south-east from the top node of the left
leg, and let $b$ be the first non-zero label on the horizontal part of
the gash.  The following could be the labels of the right leg of the gash.
\[
\psfrag{a}{\ \ $a$}
\psfrag{b}{$b$}
\pic{.75}{ruleE2}
\]
The table lists the possible values of $a$ and $b$ and the
corresponding swap regions.  In two cases the value of $b$ is omitted,
as it does not influence on the choice of swap region.
\begin{center}
  \begin{tabular}{|c|c|c|c|c|c|}
    \hline
    $a$ & $b$ & Name & Rule & Before & After \\
    \hline\hline
    0 & & \srule{EE4}
    1 & & \srule{EE5}
    2 & 2 & \srule{EE6}
    2 & $\frac{4}{1}$ & \srule{ED}
    2 & $\frac{5}{3}$ & \srule{EB}
    3 & $\frac{4}{1}$ & \srule{EF}
  \end{tabular}
\end{center}

\subsubsection{}
Assume that the gash has type E with the following shape.
\[
\psfrag{a}{$a$}
\psfrag{b}{$b$}
\pic{.75}{ruleE3}
\]
The unique applicable swap region is determined by the indicated label
$a$.
\begin{center}
  \begin{tabular}{|c|c|c|c|c|}
    \hline
    $a$ & Name & Rule & Before & After \\
    \hline\hline
    1 & \srule{EE7}
    2 & \srule{EE8}
    3 & \srule{EE9}
  \end{tabular}
\end{center}

\subsubsection{}
Assume that the gash has type E with the following shape.
\[
\psfrag{a}{\!\!\!\!$a$}
\psfrag{b}{\ \ $b$}
\pic{.75}{ruleE4}
\]
The unique applicable swap region is determined by the indicated labels
$a$ and $b$.
\begin{center}
  \begin{tabular}{|c|c|c|c|c|c|}
    \hline
    $a$ & $b$ & Name & Rule & Before & After \\
    \hline\hline
    0 & 0 & \srule{EE10}
    0 & 3 & \srule{EE11}
    0 & 5 & \srule{EE12}
    3 & 1 & \srule{EE13}
    3 & 6 & \srule{EE14}
  \end{tabular}
\end{center}

\subsubsection{}
Assume that the gash has type E with the following shape (given by the
solid black lines).
\[
\psfrag{a}{\!\!\!$a$}
\psfrag{b}{\ \,$b$}
\pic{.75}{ruleE5}
\]
The unique applicable swap region is determined by the indicated labels
$a$ and $b$.
\begin{center}
  \begin{tabular}{|c|c|c|c|c|c|}
    \hline
    $a$ & $b$ & Name & Rule & Before & After \\
    \hline\hline
    0 & 0 & \srule{EE15}
    0 & 1 & \srule{EE16}
    1 & 1 & \srule{EE17}
    1 & 3 & \srule{EE18}
    5 & 0 & \srule{EE19}
    6 & 3 & \srule{EE20}
  \end{tabular}
\end{center}

\subsection{Swap regions for a gash of type F}\label{sec:srF}

\subsubsection{}
Assume that the gash has type F and is located on the top border.
Then the unique applicable swap region is determined by the labels $a$
of the left leg.
\[
\psfrag{a}{\raisebox{-1.5mm}{$a$}}
\pic{.75}{ruleF1}
\]
\begin{center}
  \begin{tabular}{|c|c|c|c|c|}
    \hline
    $a$ & Name & Rule & Before & After \\
    \hline\hline
    $\frac{0}{5}$ & \srule{FF1}
    $\frac{1}{6}$ & \srule{FF2}
  \end{tabular}
\end{center}

\subsubsection{}
Assume that the gash has type F with the following shape.  Let $a$ and
$b$ be the labels of the edges going south-east and east from the top
node of the left leg.  Notice that $b$ may be the labels of the right
leg.
\[
\psfrag{a}{\ \ $a$}
\psfrag{b}{$b$}
\pic{.75}{ruleF2}
\]
The table lists the possible values of $a$ and $b$ together with the
corresponding swap regions.
\begin{center}
  \begin{tabular}{|c|c|c|c|c|c|}
    \hline
    $a$ & $b$ & Name & Rule & Before & After \\
    \hline\hline
    0 & $\frac{5}{0}$ & \srule{FF3}
    1 & $\frac{5}{0}$ & \srule{FC}
    1 & $\frac{6}{1}$ & \srule{FE}
    1 & 4 & \srule{FF4}
    2 & $\frac{5}{0}$ & \srule{FA}
    2 & $\frac{6}{1}$ & \srule{FD}
    2 & 2 & \srule{FF6}
    3 & $\frac{6}{1}$ & \srule{FF7}
  \end{tabular}
\end{center}

\subsubsection{}
Assume that the gash has type F of the following shape.  Then the
unique applicable swap region is determined by the indicated label
$a$.
\[
\psfrag{a}{$a$}
\psfrag{b}{$b$}
\pic{.75}{ruleF3}
\]
\begin{center}
  \begin{tabular}{|c|c|c|c|c|}
    \hline
    $a$ & Name & Rule & Before & After \\
    \hline\hline
    0 & \srule{FF8}
    1 & \srule{FF9}
    2 & \srule{FF10}
    4 & \srule{FF11}
  \end{tabular}
\end{center}

\subsection{Properties of the bijection}\label{sec:properties}

We finish this section by recording some consequences of the analysis
just carried out.  Given any single-row puzzle $P$ with a gash on its
top border, we let $\Phi(P)$ denote the puzzle obtained by propagating
the gash to the bottom border, using the swap regions of this section.
For any parallelogram shaped puzzle $P$, let $\rho(P)$ denote the 180 degree
rotation of $P$.

\begin{prop}\label{prop:invol1}
  The assignment $\Phi$ is a well defined map from the set of
  single-row puzzles with a gash on the top border into the set of
  single-row puzzles with a gash on the bottom border.  Furthermore,
  if $P$ is any single-row puzzle with a gash on the top border, then
  $\rho\, \Phi\, \rho\, \Phi(P) = P$.
\end{prop}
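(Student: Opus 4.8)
The assertion has two parts. First, that $\Phi$ is well defined: starting from any single-row puzzle with a gash on the top border, the process of repeatedly applying swap regions terminates with a gash on the bottom border, and at each step the swap region to be applied is uniquely determined. Second, that $\rho\,\Phi\,\rho\,\Phi(P) = P$, i.e.\ $\rho\,\Phi$ is an involution (equivalently, $\Phi$ is a bijection whose inverse is $\rho\,\Phi\,\rho$).

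For well-definedness I would argue as follows. The case-by-case analysis of Sections~\ref{sec:srA}--\ref{sec:srF} already establishes that whenever a gash is \emph{not} on the bottom border, exactly one swap region of the matching type applies; the remark at the end of Section~\ref{sec:propagation}'s introduction and the parenthetical comments about simplicity of the left/right border labels guarantee that the swap region fits inside the puzzle. So the only thing left is termination: I would observe that each swap moves the front edge strictly to the right (in the case of the horizontal gashes of type A and F with no non-horizontal edge, one must check that the left leg advances), and that the number of columns is finite, so after finitely many swaps the front edge reaches the right end of the row, at which point the gash is necessarily on the bottom border. A small amount of care is needed for the types (B, C, D, E, F) where the front edge is the single non-horizontal edge: one checks from the ``After'' columns that the new front edge is always strictly to the right of the old one, or the gash has landed on the bottom. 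This is routine inspection of the tables.

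For the involution statement, the key is a symmetry of the swap-region tables under the operation $\rho$ (180-degree rotation). Concretely: if a swap region $R$ transforms a local configuration with incoming front edge on the left into one with outgoing front edge on the right, then $\rho(R)$ is again one of the listed swap regions, and it transforms the $\rho$-image of the ``after'' configuration back to the $\rho$-image of the ``before'' configuration. One reads this off by checking that the 80 swap regions are permuted among themselves by $\rho$ — indeed the naming scheme (e.g.\ AF paired with FA, AB with BA, AA$i$ self-paired up to the rotation, and so on) is exactly designed to record this pairing. Granting this, let $P$ be a single-row puzzle with a gash on top, and let $P = P_0 \mapsto P_1 \mapsto \dots \mapsto P_m = \Phi(P)$ be the sequence of swaps, using swap regions $R_1,\dots,R_m$. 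Applying $\rho$ reverses the row and turns the bottom gash of $\Phi(P)$ into a top gash of $\rho\,\Phi(P)$; I claim the propagation of $\rho\,\Phi(P)$ uses precisely the regions $\rho(R_m), \rho(R_{m-1}),\dots,\rho(R_1)$ in that order, producing $\rho(P_m) \mapsto \rho(P_{m-1}) \mapsto \dots \mapsto \rho(P_0) = \rho(P)$, so that $\Phi(\rho\,\Phi(P)) = \rho(P)$, and one more application of $\rho$ gives $\rho\,\Phi\,\rho\,\Phi(P) = P$. The induction step uses uniqueness of the applicable swap region (the well-definedness just proved) together with the tabular symmetry: since $\rho(R_{m})$ is a valid swap region taking $\rho(P_m)$ to $\rho(P_{m-1})$, and the applicable region is unique, the propagation of $\rho\,\Phi(P)$ must begin with exactly that swap.

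The main obstacle is verifying the tabular symmetry under $\rho$ — that the list of 80 swap regions is closed under 180-degree rotation, with the ``before'' and ``after'' roles exchanged. This is not deep but it is the crux: it must be checked region by region (or, better, by exhibiting the explicit pairing of names and noting that the construction of the tables was symmetric by design), and every label substitution and edge orientation must match up. Once this bookkeeping is in hand, both the termination argument and the inductive unwinding of the propagation sequence are straightforward.
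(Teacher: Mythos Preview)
Your proposal is correct and follows essentially the same approach as the paper: well-definedness comes from the case analysis of Sections~\ref{sec:srA}--\ref{sec:srF}, and the involution property follows from the observation that the set of swap regions is closed under $180^\circ$ rotation with the before/after roles exchanged (the paper records this pairing exactly as you do, via the naming scheme $\mathrm{XY}\leftrightarrow\mathrm{YX}$ and $\mathrm{XX}m\leftrightarrow\mathrm{XX}m'$). Your treatment is slightly more explicit about termination and about unwinding the propagation sequence step by step, but the substance is the same.
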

\begin{proof}
  The well definedness of $\Phi$ follows by observing that the 
  swap regions of sections \ref{sec:srA} through \ref{sec:srF} cover
  all possible cases.  For the second claim, suppose that $P$ is a
  gashed puzzle and $P'$ is the result of applying a swap region $\cR$
  to $P$.  An inspection of the swap region tables shows that also the
  180 degree rotation of $\cR$ is a swap region, and this swap region
  can be applied to $\rho(P')$ to produce $\rho(P)$.  If $\cR$ is
  called XY, where X and Y are distinct gash types, then the 180
  degree rotation of $\cR$ is called YX.  And if $\cR$ is called
  XX$m$, where X is a gash type and $m$ is an integer, then the
  rotation of $\cR$ is called XX$m'$ for a (possibly) different
  integer $m'$.  The proposition follows from this.
\end{proof}

\begin{example}
  The 180 degree rotation of the first propagation in
  section~\ref{sec:gashes} is carried out with the swap regions
  FF2, FF4, and FA.
  
  \noindent
  \mbox{}\hspace{5mm}
  \pic{.75}{propexBi0} \ \raisebox{5mm}{$\mapsto$} \ 
  \pic{.75}{propexBi1} \ \raisebox{5mm}{$\mapsto$} \\ 
  \mbox{}\hspace{5mm}
  \pic{.75}{propexBi2} \ \raisebox{5mm}{$\mapsto$} \ 
  \pic{.75}{propexBi3} 
  \medskip
  
  The 180 degree rotation of the second propagation in
  section~\ref{sec:gashes} is carried out with the swap regions DD3,
  DD5, DD16, and DD12.
\end{example}

We record two additional consequences that are important for the
general bijection for $p \geq 2$.  Let $P$ be a gashed single-row
puzzle with label $c_1$ on the left border and label $c_2$ on the
right border.  We will say that $P$ has border
$(c_1,\frac{u}{u'},v',c_2)$ if $P$ has a horizontal gash on the top
border corresponding to the relation $u \xra{1} u'$, and the bottom
border of $P$ has labels $v'$.  Similarly we will say that $P$ has
border $(c_1,u,\frac{v}{v'},c_2)$ if $P$ has a horizontal gash on the
bottom border corresponding to the relation $v \xra{1} v'$, and the
top border of $P$ has labels $u$.

\begin{lemma}\label{lem:indexieq}
  Let $P$ be a single-row puzzle with a gash on its top border.  Let
  $(c_1,\frac{u}{u'},v',c_2)$ be the border of $P$, let
  $(c_1,u,\frac{v}{v'},c_2)$ be the border of $\Phi(P)$, let $(i,j)$
  be the index of $u \xra{1} u'$, and let $(k,l)$ be the index of $v
  \xra{1} v'$.  Then we have $i \leq l$ and $k < j$.  Moreover, if one
  of the relations $u\xra{1}u'$ or $v\xra{1}v'$ has type E, then $i-1
  \leq k$ and $j-1 \leq l$.
\end{lemma}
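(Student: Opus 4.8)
The plan is to track the relationship between the index $(i,j)$ of the initial horizontal gash and the index $(k,l)$ of the final horizontal gash through the entire propagation, by analyzing how each individual swap region moves the ``footprint'' of the gash. First I would set up the bookkeeping: at each step of the propagation the gash occupies a contiguous range of columns, with a left leg sitting in some column and a right leg in some column, possibly separated by horizontal middle edges. I would define, for the gash at any intermediate stage, the column index of its left leg and the column index of its right leg, and observe from Table~\ref{tab:gashes} that a horizontal gash on the top border corresponding to $u \xra{1} u'$ has its left leg in column $i$ and right leg in column $j$ (where $(i,j)$ is the index), and similarly a horizontal gash on the bottom border for $v \xra{1} v'$ has legs in columns $k$ and $l$.

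Next I would go through the swap region tables of sections~\ref{sec:srA}--\ref{sec:srF} and record, for each swap region, how it changes the left-leg column and the right-leg column of the gash. The front edge moves strictly to the right at each propagation step, so the left-leg column is weakly increasing (and strictly increasing whenever the front edge is the left leg, i.e.\ for horizontal gashes). Since the gash starts with left leg in column $i$ and ends with left leg in column $k$, and propagation only moves things rightward, we should extract $i \le k$ in most cases; but the subtlety is that when the gash is not horizontal the front edge is the non-horizontal edge, and a swap can move the non-horizontal front edge right while the trailing leg (which may be the left leg) stays put or the roles of left/right leg get reshuffled. The cleaner invariants that survive are $i \le l$ (the left leg never moves past where the final right leg ends up) and $k < j$ (the final left leg is strictly left of where the original right leg was) — these are exactly what is claimed, and they should follow from the fact that at every stage the current left-leg column is $\ge i$ and $< $ the current right-leg column, the current right-leg column is $\le l$ and $> i$, combined with the strict/weak monotonicity of the two leg-columns under each swap region. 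I would make a small table of the column-shift data for each of the named swap regions and then argue the two inequalities by a one-line induction on the number of swaps.

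For the refined statement — if one of $u\xra{1}u'$, $v\xra{1}v'$ has type E, then $i-1 \le k$ and $j-1 \le l$ — I would use the structure of the type-E rules and their gashes. The point is that type E can only be created or destroyed along the propagation via the swap regions whose names involve E (AA2/EA-type transitions, the EE$m$ regions, and the mixed regions ED, EB, EF, DE, BE, FE, etc.\ visible in the tables), and inspecting those reveals that type-E gashes have a very constrained local shape: the left and right legs are never more than one column apart from the ``expected'' position, so a type-E horizontal gash has $j = i+1$ or $j = i+2$ with tight control. Concretely, if $u\xra{1}u'$ has type E then $j \le i+2$ (from Table~\ref{tab:pierirel}, type-E rules have $S \subseteq \{0\}$, so the gap is at most the run of $0$'s, but the relevant swap-region analysis will pin this down), and I would show that propagating a type-E gash keeps the two leg-columns within distance one of each other's images, yielding $i - 1 \le k$; the symmetric statement under the $180^\circ$ rotation (using Proposition~\ref{prop:invol1}, which swaps the roles of top and bottom and of $u\xra{1}u'$ and $v\xra{1}v'$) then gives $j - 1 \le l$. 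The analogous reasoning when $v\xra{1}v'$ has type E follows by applying the rotation first.

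The main obstacle I expect is the second, refined claim: the first two inequalities $i \le l$ and $k < j$ are a soft monotonicity argument, but controlling the $\pm 1$ discrepancies for type E requires actually knowing which swap regions can appear in a propagation that starts or ends with a type-E gash, and chasing the column positions through each of them — this is a finite but genuinely case-heavy inspection of the swap-region tables, and the danger is that one type-E rule's middle segment (the $0*$ repeated edges) could a priori let the legs drift apart. I would resolve this by noting that the type-E middle segments in Table~\ref{tab:gashes} are labeled only with $0$'s, and that a type-E gash with a nonempty middle segment, when propagated, either immediately shortens its middle segment or converts type (via a region like EE7, EE8, DE, BE, \dots), so the legs of a \emph{horizontal} type-E gash on the top or bottom border satisfy $j = i+1$ or $j=i+2$, which together with $i \le l$ and $k < j$ forces the claimed bounds $i-1 \le k$ and $j-1 \le l$.
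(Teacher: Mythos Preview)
Your approach to the first pair of inequalities is in the right spirit but the stated invariants are not correct: you assert that the left-leg column is weakly increasing (``$\geq i$'' at every stage), yet the very statement you are trying to prove allows the left leg to drift one step to the left in the type~E case, so this monotonicity fails. The paper avoids this by a different device: it reformulates $i\le l$ and $k<j$ as the existence of a single non-horizontal edge $e$ whose top node separates the two legs of the initial gash and whose bottom node separates the two legs of the final gash. Such an $e$ is found either as an interior edge of the unique swap region that moves both legs at once, or as the front edge of some intermediate non-horizontal gash. This gives the inequalities in one stroke without tracking two moving quantities.

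Your argument for the type~E refinement has a genuine gap. First, the premise is false: type~E rules do \emph{not} have $S\subseteq\{0\}$. Indeed, Lemma~\ref{lem:chainorder} requires that a type~A rule can sit inside a type~E rule, which forces $\{0,2\}\subseteq S$ for type~E; so a horizontal type~E gash can have $j-i$ arbitrarily large. Second, even if $j\le i+2$ held, the arithmetic does not close: from $k<j\le i+2$ you only get $k\le i+1$, which says nothing about $i-1\le k$, and from $i\le l$ and $j\le i+2$ you only get $j-2\le l$, not $j-1\le l$. The paper's argument is entirely different: it observes (by inspecting the swap regions in section~\ref{sec:srE}) that during a propagation each leg of a type~E gash moves at most one step to the \emph{left}. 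This immediately gives $k\ge i-1$ and $l\ge j-1$ when $u\xra{1}u'$ has type~E; the case where $v\xra{1}v'$ has type~E then follows by applying the same observation to $\rho\,\Phi(P)$ via Proposition~\ref{prop:invol1}. The key quantity is the leftward displacement of each leg, not the gap between them.
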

\begin{proof}
  The inequalities $i \leq l$ and $k < j$ are equivalent to the
  existence of a non-horizontal puzzle edge $e$, such that the top
  node of $e$ separates the left and right legs of the gash on $P$,
  and the bottom node of $e$ separates the left and right legs of the
  gash on $\Phi(P)$.  Assume at first that the propagation $P \mapsto
  \Phi(P)$ involves a swap region $\cR$ that moves both legs of the
  gash.  In this case an inspection of the propagation table shows
  that $e$ may be taken as one of the interior edges of $\cR$.
  Otherwise some intermediate puzzle in the propagation contains a
  non-horizontal gash whose front edge is different from both of its
  legs.  We may then take $e$ to be the front edge of this gash.  If
  $u\xra{1}u'$ has type E, then the second claim follows because none
  of the legs of a gash of type E are able to move more than one step
  to the left during a propagation.  This can be seen by inspecting
  the swap regions in section~\ref{sec:srE}.  Finally, if $v\xra{1}v'$
  has type E, then the same argument applies to $\rho\,\Phi(P)$.
\end{proof}

\begin{defn}
Let $P$ be a single-row puzzle with border $(c_1, \frac{u}{u'}, v',
c_2)$, and let $(c_1, u, \frac{v}{v'}, c_2)$ be the border of
$\Phi(P)$.  We will say that the propagation $P \mapsto \Phi(P)$ has
{\em type\/} X--Y if the relation $u \xra{1} u'$ has type X and the
relation $v \xra{1} v'$ has type Y.  Given a small triangle $\tau$ of
$P$, the {\em top part\/} of $\tau$ is the intersection of $\tau$ with
the top border of $P$, and the {\em bottom part\/} of $\tau$ is the
intersection of $\tau$ with the bottom border of $P$.  One of these
`parts' of $\tau$ is a point, and the other is a small horizontal line
segment.  We will say that $\tau$ is an {\em interior triangle\/} of
the propagation $P \mapsto \Phi(P)$ if the top part of $\tau$ is
located between the left and right legs of the gash on $P$, and the
bottom part of $\tau$ is located between the left and right legs of
the gash on $\Phi(P)$.
\end{defn}

\begin{lemma}\label{lem:interior}
  Let $P$ be a single-row puzzle with a horizontal gash on the top
  border, and assume that the propagation $P \mapsto \Phi(P)$ has type
  E--E.  Then all interior triangles are unchanged by this propagation
  and come from the list:

  \ \hspace{12mm}
  \pic{.5}{u000} \ \pic{.5}{u205} \ \pic{.5}{u031} \ \pic{.5}{u236} \ 
  \pic{.5}{d000} \ \pic{.5}{d205} \ \pic{.5}{d031} \ \pic{.5}{d236}
\end{lemma}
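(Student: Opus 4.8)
The plan is to reduce the statement to a finite check over the swap regions whose names begin with E and which can occur in an E--E propagation. First I would recall from the analysis in Section~\ref{sec:srE} that a propagation of type E--E is a sequence of swaps, each carried out by a swap region named EE$m$ for some integer $m$ (a swap region named EX or XE with $\mathrm X \neq \mathrm E$ would change the gash type, contradicting that both the initial and final gash have type E). So it suffices to examine each of the twenty swap regions EE1 through EE20 and identify the small triangles $\tau$ that lie strictly between the two legs both before and after the swap — these are exactly the candidates for interior triangles of the propagation, since an interior triangle of the whole propagation must be an interior triangle of every individual swap through which it persists.

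Next I would carry out this inspection. For each EE$m$, one looks at the \texttt{Before} and \texttt{After} pictures: the top part of an interior triangle must sit between the two top legs in the \texttt{After} picture region and its bottom part must sit between the two bottom legs in the \texttt{Before} picture; equivalently, reading off the compact rule diagram, $\tau$ lies in the horizontal middle strip bounded by the left leg on its left and the right leg on its right, in both configurations. The key structural fact, visible by inspection of Table~\ref{tab:gashes} and of the swap region diagrams for type E, is that the two legs of an E-gash are ``parallel'' non-horizontal edges with the middle segment of edges (the $S$-edges, with $S \subseteq \{0,2,3,4\}$) between them, and a small triangle strictly between the legs is necessarily one that fits under (or over) such a horizontal middle strip. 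Running through the list one finds that in every EE$m$ the middle strip is never altered by the swap — only the legs and the puzzle pieces immediately adjacent to them move — so any interior triangle is left unchanged, and the triangles that can appear are precisely the eight displayed ones: the three ``upward'' triangles \pic{.5}{u000}, \pic{.5}{u205}, \pic{.5}{u031}, \pic{.5}{u236} with a single bottom vertex on the top border, and the three ``downward'' triangles \pic{.5}{d000}, \pic{.5}{d205}, \pic{.5}{d031}, \pic{.5}{d236} with a single top vertex on the bottom border, these being exactly the triangles whose side label pattern $(x, x)$ or the composed analogues $(0,0)$, $(2,0,5)$-type, $(0,3,1)$-type, $(2,3,6)$-type is compatible with both the $S$-labels allowed on the middle segment of an E-gash above and the $0$-labels forced below.

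The main obstacle will be bookkeeping rather than conceptual difficulty: one must be careful that an interior triangle of the propagation need not be an interior triangle of each intermediate swap in the obvious naive sense, since the legs of the gash move rightward across the row, so a triangle that is ``interior'' for the whole propagation could be to the right of the gash in an early intermediate stage and to the left in a late one. To handle this cleanly I would argue as follows: fix an interior triangle $\tau$ of $P \mapsto \Phi(P)$; its top part lies between the legs of the gash on $P$ and its bottom part between the legs of the gash on $\Phi(P)$. Since the left leg only ever moves rightward (it is either stationary or pushed right by a swap region) and likewise the right leg, the horizontal position of $\tau$ is fixed, and one checks that at every intermediate stage $\tau$ is ``spanned'' by the current gash — its top part is to the right of the current left leg and its bottom part to the left of the current right leg, or $\tau$ is an interior triangle of the current swap region. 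Combined with the observation above that no EE$m$ swap region alters any triangle spanned by its middle strip, this shows $\tau$ is unchanged throughout; and the explicit list of middle-segment and leg labels in the type-E rows of Tables~\ref{tab:pierirel} and~\ref{tab:gashes} forces $\tau$ onto the displayed list. This last forcing is where I would be most careful to enumerate exhaustively, as it is the only place a missed case would matter.
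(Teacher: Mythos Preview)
Your approach---reduce to an inspection of the type-E swap regions in Section~\ref{sec:srE}---is essentially the paper's. The paper's proof is a single sentence: it says the lemma follows by inspection of those swap regions, and records that the eight listed triangles correspond respectively to EE10, EE12, EE13, EE14, EE15, EE19, EE18, and EE20.

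There is, however, a gap in your reduction step. You assert that an E--E propagation uses only swap regions named EE$m$, on the grounds that any region named EX or XE with $\mathrm X \neq \mathrm E$ ``would change the gash type, contradicting that both the initial and final gash have type E.'' This does not follow: nothing in that sentence rules out a sequence of types such as $\mathrm E \to \mathrm D \to \mathrm E$ or $\mathrm E \to \mathrm B \to \mathrm E$ during the propagation, and the swap regions ED, DE, EB, BE, EF, FE are all present in the tables. Whether the gash type in fact stays E throughout an E--E propagation is something the inspection itself must confirm, not an assumption you may make a priori. The paper's terse ``by inspection'' absorbs this into the case check; your explicit argument for it is invalid as written and would need to be replaced by that check.

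Two smaller points. You write ``three upward'' and ``three downward'' triangles while listing four of each. And your claim that ``the left leg only ever moves rightward'' is in tension with Lemma~\ref{lem:indexieq}, which for a type-E propagation only guarantees $i-1 \leq k$, so the bottom left leg may sit one step to the left of the top one; your bookkeeping argument therefore also needs care.
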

\begin{proof}
  This follows by inspection of the swap regions in
  section~\ref{sec:srE}.  The triangles on the list correspond to the
  swap regions EE10, EE12, EE13, EE14, EE15, EE19, EE18, and EE20.
\end{proof}

\section{The general bijection}\label{sec:general}

Let $P$ be a single-row puzzle with border $(c_1,u',v',c_2)$ and let
$u$ be a label string such that $u \xra{p} u'$ for some $p$.  We
define a new single-row puzzle $\Phi^u(P)$ as follows.  If $u = u'$,
then set $\Phi^u(P) = P$.  If $p=1$ and $u \xra{1} u'$, then let $P'$
be the puzzle obtained from $P$ by changing the border to
$(c_1,\frac{u}{u'},v',c_2)$, let $(c_1,u,\frac{v}{v'},c_2)$ be the
border of $\Phi(P')$, and let $\Phi^u(P)$ be the puzzle obtained from
$\Phi(P')$ by changing this border to $(c_1,u,v,c_2)$.  Otherwise let
$u = u^0 \xra{1} u^1 \xra{1} \cdots \xra{1} u^p = u'$ be the unique
right-increasing Pieri chain from $u$ to $u'$.  By induction on $p$ we
may assume that $\Phi^{u^1}(P)$ has already been defined.  We then set
$\Phi^u(P) = \Phi^u(\Phi^{u^1}(P))$.

\ignore{
Let $P$ be a single-row puzzle with border $(c_1,u',v',c_2)$ and let
$u$ be a label string such that $u \xra{p} u'$ for some $p$.  We
define a new single-row puzzle $\Phi^u(P)$ as follows.  If $u = u'$,
then set $\Phi^u(P) = P$.  Otherwise let $u = u^0 \xra{1} u^1 \xra{1}
\cdots \xra{1} u^p = u'$ be the unique right-increasing Pieri chain
from $u$ to $u'$.  By induction on $p$ we may assume that $P_1 =
\Phi^{u^1}(P)$ has already been defined.  Let $(c_1,u^1,v^1,c_2)$ be
the border of $P_1$, and let $P_1^u$ be the puzzle obtained from $P_1$
by changing this border to $(c_1,\frac{u}{u^1},v^1,c_2)$.  Finally let
$(c_1,u,\frac{v}{v^1},c_2)$ be the border of $\Phi(P_1^u)$, and let
$\Phi^u(P)$ be the puzzle obtained from $\Phi(P_1^u)$ by changing the
border to $(c_1,u,v,c_2)$.  
}

\begin{example}\label{exm:rowex}
  Let $u = (1,0,2,4,2,5)$, $u' = (4,2,0,6,2,0)$, and $v' =
  (2,5,1,2,2,0)$.  We list the intermediate puzzles occurring when the
  map $\Phi^u$ is applied to a puzzle with border $(1,u',v',2)$.  For
  each step we have colored the union of the swap regions used to
  change the puzzle.
  \[
  \begin{split}
    & \raisebox{-6.5mm}{\pic{.75}{rowex00}} \ \ \ \mapsto \\
    & \raisebox{-6.5mm}{\pic{.75}{rowex01}} \ \ \ \mapsto \\
    & \raisebox{-6.5mm}{\pic{.75}{rowex02}} \ \ \ \mapsto \\
    & \raisebox{-6.5mm}{\pic{.75}{rowex03}}
  \end{split}
  \]
\end{example}

\begin{lemma}\label{lem:factor}
  Let $\wt u$ be any label string contained in the unique
  right-increasing Pieri chain from $u$ to $u'$.  Then we have
  $\Phi^u(P) = \Phi^u(\Phi^{\wt u}(P))$.
\end{lemma}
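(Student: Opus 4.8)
The plan is to treat this as a purely formal consequence of the recursive definition of $\Phi^u$ in section~\ref{sec:general}, with no reference to the swap-region mechanics. First I would record a preliminary observation about Pieri chains: if $u = u^0 \xra{1} u^1 \xra{1} \cdots \xra{1} u^p = u'$ is the unique right-increasing Pieri chain from $u$ to $u'$ and $0 \le r \le p$, then the truncations $u^0 \xra{1} \cdots \xra{1} u^r$ and $u^r \xra{1} \cdots \xra{1} u^p$ are again Pieri chains (the nesting condition of Definition~\ref{def:pierip} is inherited from the full chain) and are right-increasing (their sequences of $j$-indices are subsequences of the increasing sequence $j_1 < \cdots < j_p$). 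By Proposition~\ref{prop:rightincr}(a) they are therefore \emph{the} unique right-increasing Pieri chains from $u$ to $u^r$ and from $u^r$ to $u'$. These are precisely the chains that the definition of $\Phi$ invokes when $\Phi^u$ is evaluated on a puzzle with top border $u^r$, and when $\Phi^{u^r}$ is evaluated on $P$.

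Next I would establish an ``unrolled'' form of $\Phi^u$. Let $\Phi^{[w\to w']}$ denote the single-step operation from the $p=1$ clause of the definition — introduce on the top border the horizontal gash corresponding to $w \xra{1} w'$, propagate it to the bottom border by $\Phi$, and then remove the gash. The claim is: for every single-row puzzle $Q$ with top border $w'$ and every right-increasing Pieri chain $w = w^0 \xra{1} \cdots \xra{1} w^p = w'$, one has
\[
\Phi^w(Q) \;=\; \Phi^{[w^0\to w^1]}\Bigl(\Phi^{[w^1\to w^2]}\bigl(\cdots\,\Phi^{[w^{p-1}\to w^p]}(Q)\,\cdots\bigr)\Bigr).
\]
This is an induction on $p$. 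For $p=0$ both sides equal $Q$; for $p=1$ it is the $p=1$ clause verbatim. For $p \ge 2$ the definition gives $\Phi^w(Q) = \Phi^w(\Phi^{w^1}(Q))$; since $\Phi^{w^1}(Q)$ has top border $w^1$ and $w \xra{1} w^1$ is the truncated chain of length $1$, the outer map is exactly $\Phi^{[w^0\to w^1]}$, and the inner puzzle $\Phi^{w^1}(Q)$ is rewritten by the inductive hypothesis applied to the truncated right-increasing chain $w^1 \xra{1} \cdots \xra{1} w^p$ of length $p-1$.

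The lemma then follows by splitting the composition at the position of $\wt u$. Write $\wt u = u^r$ with $0 \le r \le p$. Applying the unrolled form to $\Phi^{u^r}$ on $P$ (using the truncated chain $u^r \xra{1} \cdots \xra{1} u^p$) shows that $\Phi^{\wt u}(P)$ equals $\Phi^{[u^r\to u^{r+1}]}(\cdots\Phi^{[u^{p-1}\to u^p]}(P)\cdots)$, a puzzle with top border $u^r$. Applying the unrolled form to $\Phi^u$ on that puzzle (using the truncated chain $u^0 \xra{1} \cdots \xra{1} u^r$) gives
\[
\Phi^u(\Phi^{\wt u}(P)) = \Phi^{[u^0\to u^1]}\bigl(\cdots\Phi^{[u^{r-1}\to u^r]}(\Phi^{\wt u}(P))\cdots\bigr) = \Phi^{[u^0\to u^1]}\bigl(\cdots\Phi^{[u^{p-1}\to u^p]}(P)\cdots\bigr) = \Phi^u(P),
\]
where the last equality is the unrolled form for the full chain from $u$ to $u'$. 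The extreme cases $r=0$ and $r=p$ are handled uniformly by the same computation, one of the truncated chains being empty, so that the corresponding composition is the identity and $\Phi^{u^r}$ is either $\Phi^u$ or the identity, matching the $u=u'$ clause.

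The only thing requiring care is bookkeeping of domains: every time $\Phi^{w}$ or $\Phi^{[w\to w']}$ is written, one must check that $w$, resp.\ $w'$, is the top border of its argument and that the Pieri chain being used is the appropriate truncation — in particular that this truncation has strictly smaller length, so that all inductions are legitimate. This is exactly the content of the preliminary observation together with Proposition~\ref{prop:rightincr}(a); there is no genuine combinatorial obstacle here, as the statement is entirely about how the iterated definition of $\Phi^u$ associates.
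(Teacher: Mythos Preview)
Your proof is correct and follows essentially the same approach as the paper: both argue purely from the recursive definition of $\Phi^u$ together with the observation that truncations of the right-increasing Pieri chain are themselves the unique right-increasing chains. The paper's proof is a terser direct induction on $p$ (using $\Phi^u(P)=\Phi^u(\Phi^{u^1}(P))$ and applying the induction hypothesis to $\Phi^{u^1}$), whereas you first unroll $\Phi^u$ into an explicit composition of single-step maps and then split it at $\wt u$; the mathematical content is the same.
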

\begin{proof}
  Let $u = u^0 \xra{1} u^1 \xra{1} \cdots \xra{p} u^p = u'$ be the
  right-increasing Pieri chain.  It follows from the definition that
  $\Phi^u(P) = \Phi^u(\Phi^{u^1}(P))$.  If $\wt u \neq u$, then $\wt
  u$ is contained in the unique right-increasing Pieri chain from
  $u^1$ to $u'$, so by induction on $p$ we obtain
  $\Phi^u(\Phi^{u^1}(P)) = \Phi^u(\Phi^{u^1}(\Phi^{\wt u}(P))) =
  \Phi^u(\Phi^{\wt u}(P))$, as required.
\end{proof}

\begin{lemma}\label{lem:chain}
  Let $P$ be a single-row puzzle with border $(c_1,u^p,v^p,c_2)$, and
  let $u^0 \xra{1} u^1 \xra{1} \cdots \xra{1} u^p$ be a
  right-increasing Pieri chain.  Let $(c_1,u^t,v^t,c_2)$ be the border
  of $\Phi^{u^t}(P)$ for each $t \in [0,p]$.  Then $v^0 \xra{1} v^1
  \xra{1} \cdots \xra{1} v^p$ is a Pieri chain.
\end{lemma}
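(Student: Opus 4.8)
The plan is to prove Lemma~\ref{lem:chain} by induction on $p$, using Lemma~\ref{lem:factor} to peel off the last step of the right-increasing chain and to recognize the remaining configuration as an instance of the lemma with $p$ replaced by $p-1$. Throughout, write $(i_t,j_t)$ for the index of $u^{t-1}\xra{1}u^t$ and $(k_t,l_t)$ for the index of $v^{t-1}\xra{1}v^t$.

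For $p\le 1$ there is nothing to prove: a sequence of length $\le 1$ is a Pieri chain as soon as each $v^{t-1}\xra{1}v^t$ is a valid relation, which holds by construction. So assume $p\ge 2$ and put $P'=\Phi^{u^{p-1}}(P)$. Since $u^{p-1}\xra{1}u^p$, the puzzle $P'$ is obtained from $P$ by a single propagation: introduce on the top border of $P$ the horizontal gash realizing $u^{p-1}\xra{1}u^p$ and push it to the bottom, where it realizes $v^{p-1}\xra{1}v^p$. Lemma~\ref{lem:indexieq} applied to this propagation gives $i_p\le l_p$ and $k_p<j_p$, and in addition $j_p-1\le l_p$ when $u^{p-1}\xra{1}u^p$ has type E. By Lemma~\ref{lem:factor} (with $\wt u=u^{p-1}$) one has $\Phi^{u^t}(P)=\Phi^{u^t}(P')$ for each $t\le p-1$, and $u^0\xra{1}\cdots\xra{1}u^{p-1}$ is again right-increasing, so the induction hypothesis applied to $P'$ shows that $v^0\xra{1}\cdots\xra{1}v^{p-1}$ is a Pieri chain; in particular $k_s<l_t$ for all $s\le t\le p-1$. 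Applying Lemma~\ref{lem:indexieq} to the single propagation producing each step $t$ (again using Lemma~\ref{lem:factor} to see it is a single propagation) likewise gives $k_t<j_t$ and $i_t\le l_t$ for every $t$, and by hypothesis $j_1<\cdots<j_p$.

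It then suffices to show $k_s<l_p$ for every $s<p$, since together with the above this gives $k_s<l_t$ for all $s\le t\le p$ (the case $s=t$ being the inequality $k_t<l_t$ built into the definition of the index), which is exactly the Pieri chain condition for $v^0\xra{1}\cdots\xra{1}v^p$. If $u^{p-1}\xra{1}u^p$ has type E, then $j_s<j_p$ forces $j_s\le j_p-1\le l_p$, so $k_s<j_s\le l_p$. If $u^{p-1}\xra{1}u^p$ is not of type E, I would apply Lemma~\ref{lem:chainorder} to the length-two Pieri chain $u^{p-2}\xra{1}u^{p-1}\xra{1}u^p$: were $i_p<j_{p-1}$, the lemma would force either $j_p<j_{p-1}$, contradicting right-increasingness, or $u^{p-1}\xra{1}u^p$ to have type E, contrary to assumption; hence $i_p\ge j_{p-1}$, so $l_p\ge i_p\ge j_{p-1}$. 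Then $k_{p-1}<j_{p-1}\le l_p$, while for $s\le p-2$ one has $k_s<j_s<j_{p-1}\le l_p$. In either case $k_s<l_p$ for all $s<p$, completing the induction.

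The one place where the combinatorics of the rules really enters is the non--type-E case: the key fact is that a right-increasing Pieri chain is ``non-crossing at its last step'' unless that step has type E --- precisely the content of Lemma~\ref{lem:chainorder} --- and this is what transfers the lower bound $l_p\ge j_{p-1}$ from the top border down to the bottom. I expect the only other point needing care to be the purely bookkeeping verification, via Lemma~\ref{lem:factor} and the recursive definition of $\Phi^u$, that $\Phi^{u^t}(P)$ and $\Phi^{u^t}(P')$ coincide on the nose for $t\le p-1$, so that the induction hypothesis applies without adjustment; everything else is immediate from Lemmas~\ref{lem:indexieq} and~\ref{lem:chainorder}.
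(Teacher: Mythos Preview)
Your proof is correct and essentially the same as the paper's: both split into cases according to whether the relevant step has type~E (via Lemma~\ref{lem:chainorder}) and then chain together the inequalities $k_s<j_s$, $i_t\le l_t$, and $j_t-1\le l_t$ from Lemma~\ref{lem:indexieq}. The only difference is organizational---the paper argues directly that $k_s<l_t$ for every pair $s<t$, whereas you set this up as an induction on $p$ and verify only the new inequalities $k_s<l_p$.
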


Before we prove Lemma~\ref{lem:chain}, we emphasize that the produced
Pieri chain $v^0 \xra{1} v^1 \xra{1} \cdots \xra{1} v^p$ is not
necessarily right increasing;
when this occurs, we say that the propagation paths {\em cross}.
In addition, the lemma is false without
the assumption that the Pieri chain $u^0 \xra{1} u^1 \xra{1} \cdots
\xra{1} u^p$ is right increasing.  These points are essential to how
propagation paths are allowed to cross each other in a controlled way.

\begin{proof}
  Since $\Phi^{u^{t-1}}(P) = \Phi^{u^{t-1}}(\Phi^{u^t}(P))$ by
  Lemma~\ref{lem:factor}, it follows from the definition of $\Phi$ in
  section~\ref{sec:propagation} that $v^{t-1} \xra{1} v^t$ for each $t
  \in [1,p]$.  Let $(i_t,j_t)$ be the index of $u^{t-1} \xra{1} u^t$,
  and let $(k_t,l_t)$ be the index of $v^{t-1} \xra{1} v^t$ for each
  $t$.  Then we have $j_1 < j_2 < \dots < j_p$ by assumption, and
  Lemma~\ref{lem:indexieq} implies that $i_t \leq l_t$ and $k_t < j_t$
  for each $t$.  Let $1 \leq s < t \leq p$; we must show that $k_s <
  l_t$.  If $j_{t-1} \leq i_t$, then this is true because $k_s < j_s
  \leq j_{t-1} \leq i_t \leq l_t$.  Otherwise it follows from
  Lemma~\ref{lem:chainorder} that $u^{t-1} \xra{1} u^{t}$ has type E,
  so Lemma~\ref{lem:indexieq} implies that $j_t-1 \leq l_t$.  In this
  case we obtain $k_s < j_s \leq j_t-1 \leq l_t$, as required.
\end{proof}

\begin{cor}
  Let $P$ be a single-row puzzle with border $(c_1,u',v',c_2)$ such
  that $u \xra{p} u'$ for some $u$ and $p$, and let $(c_1,u,v,c_2)$ be
  the border of $\Phi^u(P)$.  Then $v \xra{p} v'$.
\end{cor}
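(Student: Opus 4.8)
The plan is to deduce the corollary almost immediately from the preceding lemmas by unwinding the inductive definition of $\Phi^u$ and tracking how the Pieri chain on the bottom border is assembled. First I would handle the base cases: if $u=u'$ there is nothing to prove since $v=v'$ and $v\xra{0}v'$; and if $p=1$ the statement is exactly the content of the construction in section~\ref{sec:general} together with Proposition~\ref{prop:invol1}, namely that $\Phi$ carries a single-row puzzle with a horizontal gash on the top border (encoding $u\xra{1}u'$) to one with a horizontal gash on the bottom border (encoding $v\xra{1}v'$), so that $v\xra{1}v'$ holds.

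For the general case $p\geq 2$, I would invoke the unique right-increasing Pieri chain $u=u^0\xra{1}u^1\xra{1}\cdots\xra{1}u^p=u'$ guaranteed by Proposition~\ref{prop:rightincr}(a). Applying $\Phi^{u^t}$ to $P$ for each $t$ produces a sequence of single-row puzzles with borders $(c_1,u^t,v^t,c_2)$, where $v^p=v'$ (since $\Phi^{u^p}=\Phi^{u'}=\id$ on $P$, as $u^p=u'$) and $v^0=v$ (since $\Phi^{u^0}=\Phi^u$). The key input is Lemma~\ref{lem:chain}, which states precisely that $v^0\xra{1}v^1\xra{1}\cdots\xra{1}v^p$ is a Pieri chain. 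This is a Pieri chain of length $p$ from $v$ to $v'$, hence $v\xra{p}v'$ by definition, and the corollary follows.

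The only subtlety worth spelling out is that the chain $v^0\xra{1}\cdots\xra{1}v^p$ need not be right-increasing — the propagation paths may cross — which is exactly the point flagged in the remark after Lemma~\ref{lem:chain}. This does not matter here, because the definition of $u\xra{p}v$ (Definition~\ref{def:pierip}) only requires the existence of \emph{some} length-$p$ Pieri chain, and the weaker condition $i_s<j_t$ for $s\leq t$ is already part of the definition of a Pieri chain, which Lemma~\ref{lem:chain} delivers. So there is no real obstacle: the work has all been done in Lemmas~\ref{lem:factor}, \ref{lem:indexieq}, \ref{lem:chainorder}, and \ref{lem:chain}, and the corollary is a one-line bookkeeping consequence. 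If anything, the step requiring the most care is simply confirming the identifications $v^0=v$ and $v^p=v'$ from the recursive definition of $\Phi^u$, and noting that these match the borders named in the statement.
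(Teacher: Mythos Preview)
Your proof is correct and takes exactly the same route as the paper, whose entire proof reads ``This follows from Lemma~\ref{lem:chain}.'' You have simply unpacked that one-line citation: the use of the right-increasing Pieri chain from Proposition~\ref{prop:rightincr}(a), the identifications $v^0=v$ and $v^p=v'$, and the observation that the resulting bottom chain need not be right-increasing are all implicit in the paper's terse reference.
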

\begin{proof}
  This follows from Lemma~\ref{lem:chain}.
\end{proof}

Notice that if we turn the last puzzle in Example~\ref{exm:rowex}
upside-down and apply $\Phi^{{v'}^\vee}$, then the sequence of
propagations in the example will be undone in reverse order.  
However, when the propagation paths cross, we get a slightly 
different propagation order.  This occurs when
a propagation of type A--A is carried out inside a propagation of
type E--E.

\begin{example}
  We show the steps involved in applying $\Phi^u$ to a single-row
  puzzle with border $(0,u',v',0)$, where $u = (3,0,0,2,2,4)$, $u' =
  (5,0,2,2,0,1)$, and $v' = u^\vee$.  The first step is a propagation
  of type E--E, while the second and third steps are propagations of
  type A--A.
  \[
  \begin{split}
    & \raisebox{-6.5mm}{\pic{.75}{propord0}} \ \ \ \mapsto \\
    & \raisebox{-6.5mm}{\pic{.75}{propord1}} \ \ \ \mapsto \\
    & \raisebox{-6.5mm}{\pic{.75}{propord2}} \ \ \ \mapsto \\
    & \raisebox{-6.5mm}{\pic{.75}{propord3}}
  \end{split}
  \]
  Notice that the resulting puzzle is equal to the 180 degree rotation
  of the initial puzzle $P$, so we have $\rho\, \Phi^u \rho\,
  \Phi^u(P) = P$.  However, the second application of $\Phi^u$ does
  not undo the propagations of the first application of $\Phi^u$ in
  the expected reverse order.  This is the main issue in the proof of
  Proposition~\ref{prop:inverse} below.
\end{example}

Let $Q$ be a single-row puzzle with border $(c_1,u,v,c_2)$ such that
$v \xra{p} v'$ for some $v'$ and $p$.  Then we define $\Phi_{v'}(Q) =
\rho\, \Phi^{{v'}^\vee}\!\rho(Q)$.  Lemma~\ref{lem:chain} implies that
this puzzle has border $(c_1,u',v',c_2)$ for a label string $u'$ with
$u \xra{p} u'$.

\begin{prop}\label{prop:inverse}
  Let $P$ be a single-row puzzle with border $(c_1,u',v',c_2)$ and let
  $u$ be a label string such that $u \xra{p} u'$ for some $p$.  Then
  we have $\Phi_{v'}(\Phi^u(P)) = P$.
\end{prop}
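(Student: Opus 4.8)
The plan is to prove $\Phi_{v'}(\Phi^u(P)) = P$ by induction on $p$, reducing everything to the single-step case $p=1$ (which follows from the second claim of Proposition~\ref{prop:invol1}, namely $\rho\,\Phi\,\rho\,\Phi(P)=P$) together with a careful analysis of how the order of propagations changes when a type A--A propagation is nested inside a type E--E propagation. Concretely, write $u = u^0 \xra{1} u^1 \xra{1} \cdots \xra{1} u^p = u'$ for the unique right-increasing Pieri chain, and let $(c_1,u^t,v^t,c_2)$ be the border of $\Phi^{u^t}(P)$, so that by Lemma~\ref{lem:chain} we get a (not necessarily right-increasing) Pieri chain $v = v^0 \xra{1} v^1 \xra{1} \cdots \xra{1} v^p = v'$. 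Applying $\Phi_{v'} = \rho\,\Phi^{{v'}^\vee}\!\rho$ means working with the reversed strings, and the natural first attempt is to show that $\Phi^{{v'}^\vee}$ processes the reversed chain ${v'}^\vee = (v^p)^\vee \xra{1} \cdots \xra{1} (v^0)^\vee = v^\vee$ in exactly the order that undoes the individual steps $v^{t-1}\xra{1}v^t$ one at a time in reverse.

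First I would treat the case where the chain $v^0 \xra{1}\cdots\xra{1}v^p$ happens to be right-increasing as well: then ${v'}^\vee \xra{1}\cdots\xra{1}v^\vee$ is the reversed chain, which (using the order-reversal symmetry $u\xra{p}v \iff v^\vee\xra{p}u^\vee$ noted before Lemma~\ref{lem:chainorder}) is itself the unique right-increasing Pieri chain from ${v'}^\vee$ to $v^\vee$; then Lemma~\ref{lem:factor}, the single-step identity $\rho\,\Phi\,\rho\,\Phi = \mathrm{id}$ from Proposition~\ref{prop:invol1}, and induction on $p$ give the result step by step. Second, and this is where the real work lies, I would handle the general case where the produced chain on the bottom is not right-increasing. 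By Lemma~\ref{lem:chainorder} the only way two consecutive steps $v^{t-1}\xra{1}v^t\xra{1}v^{t+1}$ can fail to satisfy $l_t < l_{t+1}$ is when one of them has type A and the other type E (with the indices nested), and by the propagation analysis this corresponds precisely on the top to a type A--A propagation carried out inside a type E--E propagation — the phenomenon illustrated in the example just before this proposition. The key combinatorial lemma I would isolate is that in such a nested configuration, swapping the order of the two steps (applying $\Phi$ for the outer E step and the inner A step in the other order) commutes appropriately, i.e.\ produces the same composite puzzle; this should follow from Lemma~\ref{lem:interior}, which says the interior triangles of an E--E propagation are unchanged and of a restricted form, so that an A--A propagation confined strictly between the two E-legs acts on an unaffected region and genuinely commutes with the E-propagation.

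With that commutation lemma in hand, the argument is: given any right-increasing chain on top, repeatedly apply the commutation moves to the bottom chain $v^0\xra{1}\cdots\xra{1}v^p$ to transform it into a right-increasing chain from $v$ to $v'$ (this terminates for the same sorting reason as in the proof of Proposition~\ref{prop:rightincr}), while by the commutation lemma the composite map $\Phi^u$ is unchanged under each move; then $\Phi_{v'}=\rho\,\Phi^{{v'}^\vee}\!\rho$ applied to the reversed right-increasing chain undoes the steps in reverse by the right-increasing case already handled, giving $\Phi_{v'}(\Phi^u(P)) = P$. I expect the main obstacle to be the commutation lemma for nested A-inside-E propagations: one must check not merely that the final strings agree but that the full puzzles agree, which requires confirming (via the tables in Section~\ref{sec:propagation} and Lemma~\ref{lem:interior}) that the swap regions of the inner A--A propagation and those of the outer E--E propagation occupy disjoint or compatibly-overlapping regions of the puzzle so that the two propagations literally commute as operations on gashed puzzles. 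Everything else is bookkeeping with the index inequalities of Lemma~\ref{lem:indexieq} and induction.
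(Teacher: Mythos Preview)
Your overall architecture---induction on $p$, base case via Proposition~\ref{prop:invol1}, and a commutation argument for a nested A--A inside an E--E propagation justified through Lemma~\ref{lem:interior}---matches the paper's proof closely. The commutation step you sketch is essentially what the paper carries out (showing the inner A--A swap must be of type AA1 or AA4 and therefore commutes with the surrounding E--E propagation).

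There is, however, a genuine gap in your ``easy'' case. You claim that if the bottom chain $v^0 \xra{1} \cdots \xra{1} v^p$ is right-increasing, then the reversed chain $(v^p)^\vee \xra{1} \cdots \xra{1} (v^0)^\vee$ is the unique right-increasing chain from ${v'}^\vee$ to $v^\vee$. This is false: if step $t$ has index $(k_t,l_t)$, then step $s$ of the reversed chain has right index $n+1-k_{p+1-s}$, so the reversed chain is right-increasing iff $k_1 < k_2 < \cdots < k_p$, which is \emph{not} implied by $l_1 < l_2 < \cdots < l_p$. Concretely, an A--A step with index $(3,4)$ followed by an E--E step with index $(1,5)$ is right-increasing on the bottom, yet its reversal is not. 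In particular, sorting the bottom chain to be right-increasing (as you propose) buys you nothing, because exactly the nested A-inside-E configurations you want to resolve already \emph{are} right-increasing.

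The paper avoids this by using the correct peeling condition: it splits on whether $k_p = \max\{k_1,\dots,k_p\}$. When this holds, the first step of the reversed chain has minimal right index, so Proposition~\ref{prop:rightincr}(b) guarantees $(v^{p-1})^\vee$ lies on the right-increasing chain from ${v'}^\vee$ to $v^\vee$, and Lemma~\ref{lem:factor} plus induction finishes. When $k_p$ is not maximal, the paper shows (via Lemma~\ref{lem:chainorder} and Lemma~\ref{lem:indexieq}) that the last two steps on \emph{both} top and bottom form a nested A-inside-E pair, performs a single commutation to produce $\wt u$ and $\wt P$ with $\Phi^u(\wt P)=P_0$, observes that after this commutation the new last step has $k_{p-1}=\max$, and then applies induction. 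Your argument can be repaired by replacing ``right-increasing'' with the condition on the left indices $k_t$, but as written it does not go through.
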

\begin{proof}
  We proceed by induction on $p$.  The statement is clear if $p=0$,
  and for $p=1$ it follows from Proposition~\ref{prop:invol1}.  Assume
  that $p \geq 2$ and let $u = u^0 \xra{1} u^1 \xra{1} \cdots \xra{1}
  u^p = u'$ be the unique right-increasing Pieri chain.  For each $t
  \in [0,p]$ we set $P_t = \Phi^{u^t}(P)$, and we let
  $(c_1,u^t,v^t,c_2)$ be the border of this puzzle.  Then $v = v^0
  \xra{1} v^1 \xra{1} \cdots \xra{1} v^p = v'$ is a Pieri chain by
  Lemma~\ref{lem:chain}.  Let $(i_t,j_t)$ be the index of $u^{t-1}
  \xra{1} u^t$ and let $(k_t,l_t)$ be the index of $v^{t-1} \xra{1}
  v^t$.

  Assume first that $k_p = \max\{k_1,\dots,k_p\}$.  Then it follows
  from Proposition~\ref{prop:rightincr}(b) and Lemma~\ref{lem:factor}
  that $\Phi_{v'}(P_0) = \Phi_{v'}(\Phi_{v^{p-1}}(P_0))$.  Since $P_0
  = \Phi^u(P_{p-1})$, we obtain from the induction hypothesis that
  $\Phi_{v^{p-1}}(P_0) = P_{p-1}$.  We deduce that
  $\Phi_{v'}(\Phi^u(P)) = \Phi_{v'}(P_0) =
  \Phi_{v'}(\Phi_{v^{p-1}}(P_0)) = \Phi_{v'}(P_{p-1}) = P$, as
  required.

  Otherwise we have $k_p < k_s$ for some $s \leq p-1$.  Since $\{ v^t
  \}$ is a Pieri chain, this implies that $k_p+1 \leq k_s <
  \min(l_{p-1},l_p)$.  It follows that the relation $v^{p-1} \xra{1}
  v^p$ is not of type A, and Lemma~\ref{lem:chainorder} implies that
  $k_p < k_{p-1} < l_{p-1} < l_p$, the relation $v^{p-2} \xra{1}
  v^{p-1}$ has type A, and $v^{p-1} \xra{1} v^p$ has type E.
  Lemma~\ref{lem:indexieq} now implies that $i_p \leq k_p+1 \leq
  k_{p-1} < j_{p-1}$, so another application of
  Lemma~\ref{lem:chainorder} shows that $i_p < i_{p-1} < j_{p-1} <
  j_p$, the relation $u^{p-2} \xra{1} u^{p-1}$ has type A, and the
  relation $u^{p-1} \xra{1} u^p$ has type E.  Notice also that
  $k_t \leq l_{p-1}-1 = k_{p-1}$ for each $t \in [1,p-1]$, so
  $k_{p-1} = \max\{k_1,\dots,k_p\}$.

  The propagation $P = P_p \mapsto P_{p-1}$ is carried out by first
  changing the border of $P_p$ to $(c_1, \frac{u^{p-1}}{u^p}, v^p,
  c_2)$, then applying $\Phi$, and finally changing the border of the
  resulting puzzle to $(c_1,u^{p-1},v^{p-1},c_2)$.  This application
  of $\Phi$ is therefore a propagation of type E--E.  Furthermore, the
  step $P_{p-1} \mapsto P_{p-2}$ is carried out by changing the border
  of $P_{p-1}$ to $(c_1, \frac{u^{p-2}}{u^{p-1}}, v^{p-1}, c_2)$,
  applying $\Phi$, and changing the border of the result to
  $(c_1,u^{p-2},v^{p-2},c_2)$.  Here the application of $\Phi$ is a
  propagation of type A--A, which can happen only by applying a single
  swap region $\cR$ of type AA1, AA2, AA3, or AA4.  Notice that all
  small triangles of $\cR$ (before the swap) must be interior
  triangles to the propagation $P_p \mapsto P_{p-1}$ of type E--E.  We
  therefore deduce from Lemma~\ref{lem:interior} that $\cR$ must have
  type AA1 or AA4.  Since the triangles after a swap of type AA1 or
  AA4 are also on the list of Lemma~\ref{lem:interior}, the E--E and
  A--A propagations can be carried out in the opposite order with the
  same result.

  More precisely, let $\wt u$ be the unique label string described in
  Lemma~\ref{lem:chainorder}, so that $u^{p-2} \xra{1} \wt u \xra{1}
  u'$ is a Pieri chain, $u^{p-2} \xra{1} \wt u$ has type E, and $\wt u
  \xra{1} u'$ has type A.  Similarly, let $\wt v$ be the unique label
  string such that $v^{p-2} \xra{1} \wt v \xra{1} v'$ is a Pieri
  chain, $v^{p-2} \xra{1} \wt v$ has type E, and $\wt v \xra{1} v'$
  has type A.  Now set $\wt P = \Phi^{\wt u}(P)$.  Then $\wt P$ has
  border $(c_1,\wt u,\wt v,c_2)$ and $\Phi^{u^{p-2}}(\wt P) =
  P_{p-2}$.

  Using that $u^0 \xra{1} \cdots \xra{1} u^{p-2} \xra{1} \wt u$ is a
  right-increasing Pieri chain, we obtain $P_0 = \Phi^u(P_{p-2}) =
  \Phi^u(\Phi^{u^{p-2}}(\wt P)) = \Phi^u(\wt P)$.  
  Since $v^0 \xra{1}
  \cdots \xra{1} v^{p-2} \xra{1} \wt v \xra{1} v'$ is a Pieri chain and
  $\wt v \xra{1} v'$ has index $(k_{p-1},l_{p-1})$ 
  with $k_{k-1} = \max\{k_1,\dots,k_p\}$, we obtain from
  Proposition~\ref{prop:rightincr}(b) and Lemma~\ref{lem:factor} that
  $\Phi_{v'}(P_0) = \Phi_{v'}(\Phi_{\wt v}(P_0))$. 
  Finally, since the induction hypothesis implies that $\Phi_{\wt
    v}(P_0) = \Phi_{\wt v}(\Phi^u(\wt P)) = \wt P$, we obtain
  $\Phi_{v'}(\Phi^u(P)) = \Phi_{v'}(P_0) = \Phi_{v'}(\Phi_{\wt
    v}(P_0)) = \Phi_{v'}(\wt P) = P$, as required.
\end{proof}

\begin{proof}[Proof of Proposition~\ref{prop:key}]
  The identity (\ref{eqn:id}) follows from Corollary~\ref{cor:idpuz}.
  Let $u$, $v'$, $w_1$, and $w_2$ be 012-strings and let $p \in \N$.
  We will say that a parallelogram shaped puzzle has border $(w_1,u,v',w_2)$
  if $u$ gives the labels of the top border, $v'$ gives the labels of the
  bottom border, $w_1$ gives the labels of the left border, and $w_2$ gives
  the labels of the right border, all in north-west to south-east
  order.  
  Recall from section~\ref{sec:multone} that
  to prove the identity (\ref{eqn:assoc}) it suffices to
  construct a bijection between the set of parallelogram shaped puzzles with
  border $(w_1,u',v',w_2)$ such that $u \xra{p} u'$, and the set of
  parallelogram shaped puzzles with border $(w_1,u,v,w_2)$ such that $v
  \xra{p} v'$.  We do this by modifying one row at the time.
  \[
  {
    \psfrag{l}{$\!\!\!\!\!\!\!\!\!w_1$}
    \psfrag{r}{$\ \ \,w_2$}
    \psfrag{t}{\raisebox{3mm}{$\!\!\!u'$}}
    \psfrag{b}{\raisebox{-3mm}{$v'$}}
    \pic{.6}{rhompuz2}
  }
  \raisebox{11mm}{\ \ \ \ \ \ $\longleftrightarrow$\ \ \ \ \ \ }
  {
    \psfrag{l}{$\!\!\!\!\!\!\!\!\!w_1$}
    \psfrag{r}{$\ \ \,w_2$}
    \psfrag{t}{\raisebox{3mm}{$\!\!\!u$}}
    \psfrag{b}{\raisebox{-3mm}{$v$}}
    \pic{.6}{rhompuz2}
  }
  \]

  Given a parallelogram shaped puzzle $P'$ with border $(w_1,u',v',w_2)$,
  let $n$ be the number of rows in this puzzle, let $P'_i$ be the
  subpuzzle in the $i$-th row for $1 \leq i \leq n$ (counted from top
  to bottom), and let
  $(c_i,{u'}^{i-1},{u'}^i,c'_i)$ be the border of $P'_i$.  Then
  ${u'}^0 = u'$ and ${u'}^n = v'$.  Set $u^0 = u$ and $P_1 =
  \Phi^{u^0}(P'_1)$.  Assume inductively that $P_1,\dots,P_i$ have
  already been defined.  Then let $(c_i,u^{i-1},u^i,c'_i)$ be the
  border of $P_i$ and set $P_{i+1} = \Phi^{u^i}(P'_{i+1})$.  Finally,
  let $\Phi^u(P')$ be the union of the rows $P_i$ for $i \in [1,n]$,
  and let $v$ be the labels of the bottom border of this puzzle.  Then
  $\Phi^u(P')$ is a valid puzzle with border $(w_1,u,v,w_2)$ such that
  $v \xra{p} v'$.  Finally, it follows from
  Proposition~\ref{prop:inverse} that $\rho\, \Phi^{{v'}^\vee} \!
  \rho\, \Phi^u (P') = P'$, which implies that the map $P' \mapsto
  \Phi^u(P')$ is a bijection.
\end{proof}

\begin{example}\label{example:cross}
  Here is an example of the bijection in the proof of
  Proposition~\ref{prop:key} in a case where two propagation paths
  cross each other.
  \begin{align*}
    & \pic{.7}{crossgood0} \raisebox{11mm}{$\mapsto$}
      \pic{.7}{crossgood1} \raisebox{11mm}{$\mapsto$} \\[-5.5pt]
    & \pic{.7}{crossgood2} \raisebox{11mm}{$\mapsto$}
      \pic{.7}{crossgood3} \raisebox{11mm}{$\mapsto$} \\[-5.5pt]
    & \pic{.7}{crossgood4}
  \end{align*}

  Here is what happens if the propagations are carried out one after
  another: that is, each gash is allowed to propagate to the 
  bottom of the puzzle before the next propagation begins.  
  \begingroup
  \allowdisplaybreaks[1]
  \begin{align*}
    & \pic{.7}{crossbad0} \raisebox{11mm}{$\mapsto$}
      \pic{.7}{crossbad1} \raisebox{11mm}{$\mapsto$} \\[-5.5pt]
    & \pic{.7}{crossbad2} \raisebox{11mm}{$\mapsto$}
      \pic{.7}{crossbad3} \raisebox{11mm}{$\mapsto$} \\[-5.5pt]
    & \pic{.7}{crossbad4}
  \end{align*}
  \endgroup
  Notice that the resulting 012-strings $v = (2,0,1,2)$ and
  $v' = (2,2,0,1)$ on the bottom border do not satisfy $v \xra{2} v'$.
  This illustrates why several propagations must be handled 
  simultaneously and in the correct sequence, in order to obtain a 
  proof of Proposition~\ref{prop:key}.
\end{example}

\section{A quantum Littlewood-Richardson rule}
\label{sec:qlrrule}

Let $Y={\mathrm G}(m,n)$ denote the Grassmannian parametrizing
$m$-dimensional complex linear subspaces of ${\mathbb C}^n$. The
Schubert varieties $Y_u$ in $Y$ and their classes $[Y_u]$ in
$H^*(Y,\Z)$ may be indexed by 02-strings $u=(u_1,\ldots, u_n)$ with $m$
zeroes and $n-m$ twos. The codimension of $Y_u$ in $Y$ is equal to the
number of inversions $\ell(u)$.

Let $u$, $v$, and $w$ be three 02-strings as above and fix a
nonnegative integer $d$ such that $\ell(u)+\ell(v)+\ell(w)=
m(n-m)+nd$.  
The three-point, genus zero Gromov-Witten invariant
$\langle [Y_u], [Y_v], [Y_w] \rangle_d$ 
may be defined as the number of
rational maps $f\colon \bP^1\to Y$ of degree $d$ such that $f(0)\in
Y_u$, $f(1)\in Y_v$, and $f(\infty)\in Y_w$, whenever the Schubert
varieties $Y_u$, $Y_v$, and $Y_w$ are taken to be in general
position. When $d=0$, we have that $\langle [Y_u], [Y_v], [Y_w]
\rangle_0$ is equal to the triple intersection number $\int_Y[Y_u]\cdot
[Y_v]\cdot [Y_w]$, which is a Schubert structure constant in the
cohomology of the Grassmannian $Y$, and given by the classical
Littlewood-Richardson rule. 
In general, 
the invariants $\langle
[Y_u], [Y_v], [Y_w] \rangle_d$ are Schubert structure constants
in the small quantum cohomology ring of $Y$, which is a
$q$-deformation of $H^*(Y,\Z)$.

If $d \leq \min(m,n-m)$, define a $012$-string $u_d$ by changing 
the first $d$ twos and the
last $d$ zeroes of $u$ to ones. For example, if $Y={\mathrm G}(4,10)$,
$d=2$, and $u=2022020202$, then $u_d=1012021212$. We similarly define
the $012$-strings $v_d$ and $w_d$. Our main theorem may now be used to
establish the following conjecture of Buch, Kresch, and Tamvakis from
\cite[section 2.4]{buch.kresch.tamvakis:gromov-witten}.

\begin{mainthm}[Quantum Littlewood-Richardson Rule]
Let $Y_u$, $Y_v$, and $Y_w$ be Schubert varieties in $\mathrm{G}(m,n)$, 
and suppose 
that
$\ell(u)+\ell(v)+\ell(w)= m(n-m)+nd$.
The Gromov-Witten invariant
$\langle [Y_u], [Y_v], [Y_w] \rangle_d$ is equal to the number of
triangular puzzles for which $u_d$, $v_d$, and $w_d$ are the labels
on the left, right, and bottom sides, in clockwise order, when
$d \leq \min(m,n-m)$, and is zero otherwise.
\end{mainthm}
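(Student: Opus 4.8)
The plan is to deduce the Quantum Littlewood-Richardson Rule from the already-established Theorem~\ref{thm:main} together with the geometric identification of Gromov-Witten invariants on Grassmannians with Schubert structure constants on two-step flag varieties, due to Buch, Kresch, and Tamvakis \cite{buch.kresch.tamvakis:gromov-witten}. Concretely, the main theorem of that paper asserts an equality
\[
\langle [Y_u], [Y_v], [Y_w] \rangle_d = \int_{\Fl(m-d,m+d;n)} [X_{u_d}] \cdot [X_{v_d}] \cdot [X_{w_d}]
\]
whenever $d \leq \min(m,n-m)$, where the two-step flag variety $\Fl(m-d,m+d;n)$ parametrizes flags $A \subset B \subset \C^n$ with $\dim A = m-d$ and $\dim B = m+d$, and $u_d, v_d, w_d$ are the $012$-strings obtained by the stated modification of $u, v, w$. (One should double-check the indexing convention: $u_d$ has $m-d$ zeros, $2d$ ones, and $n-m-d$ twos, which matches the requirement that it be a $012$-string for $\Fl(m-d,m+d;n)$.) The first step is to quote this result precisely and observe that the combinatorial hypothesis $\ell(u)+\ell(v)+\ell(w) = m(n-m)+nd$ is exactly what makes the right-hand triple intersection a number (i.e.\ the codimensions add up to the dimension of the two-step flag variety); without it, both sides vanish for degree reasons.

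**Next**, I would handle the case $d > \min(m,n-m)$ separately. Here the claim is that $\langle [Y_u],[Y_v],[Y_w]\rangle_d = 0$. This follows from a standard vanishing: on $\mathrm{G}(m,n)$ the quantum parameter $q$ has degree $n$, and the quantum cohomology ring is a free module over $\Z[q]$ with basis the Schubert classes; a nonzero Gromov-Witten invariant of degree $d$ requires that the "quantum-reduced" shifted partitions still fit, which is precisely the condition $d \leq \min(m,n-m)$. Alternatively, and perhaps more cleanly, one cites \cite{buch.kresch.tamvakis:gromov-witten} directly for the vanishing statement, since it is already part of their structural analysis of $\QH^*(\mathrm{G}(m,n))$.

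**Then** I would invoke Theorem~\ref{thm:main} applied to $X = \Fl(m-d,m+d;n)$ with the three $012$-strings $u_d, v_d, w_d$: this says the triple intersection number equals the number of triangular puzzles whose left, right, and bottom borders are labeled $u_d$, $v_d$, $w_d$ in clockwise order. Combining this with the Buch--Kresch--Tamvakis identity from the first step yields exactly the asserted puzzle count, completing the proof.

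**The main obstacle** I anticipate is purely bookkeeping rather than conceptual: making sure the labeling conventions line up. The paper of \cite{buch.kresch.tamvakis:gromov-witten} may use a different convention for indexing Schubert classes (partitions versus $012$-strings, or rows versus columns), and one must verify that the "first $d$ twos and last $d$ zeroes become ones" recipe is consistent with the dictionary used there, and also consistent with the clockwise border-labeling convention of Theorem~\ref{thm:main}. In particular the Poincar\'e duality pairing $\int_X [X_u]\cdot[X_v] = \delta_{u^\vee,v}$ and the precise correspondence between the three marked points $0,1,\infty$ on $\bP^1$ and the three borders of the puzzle should be checked, since an asymmetry there could require applying the duality isomorphism $\phi^*$ or a rotation of puzzles (which, as noted after Proposition~\ref{prop:key}, satisfies $C^w_{u,v} = C^{\wh w}_{\wh v,\wh u}$ and is compatible with cyclic symmetry of the triple intersection). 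Once these conventions are pinned down, the proof is a one-line deduction; essentially all the mathematical content is in Theorem~\ref{thm:main} and in the geometry of \cite{buch.kresch.tamvakis:gromov-witten}.
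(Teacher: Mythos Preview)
Your proposal is correct and follows essentially the same route as the paper: cite \cite[Corollary~1]{buch.kresch.tamvakis:gromov-witten} to identify $\langle [Y_u],[Y_v],[Y_w]\rangle_d$ with the triple intersection $\int_{\Fl(m-d,m+d;n)}[X_{u_d}]\cdot[X_{v_d}]\cdot[X_{w_d}]$, then apply Theorem~\ref{thm:main}. Your additional remarks on the vanishing for $d>\min(m,n-m)$ and on convention-checking are sound elaborations, but the paper simply absorbs both into the citation of \cite{buch.kresch.tamvakis:gromov-witten} and gives the one-line deduction you anticipated.
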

\begin{proof}
The $012$-strings $u_d$, $v_d$, and $w_d$ index Schubert varieties
$X_{u_d}$, $X_{v_d}$, and $X_{w_d}$ in the two-step flag variety
$\mathrm{Fl}(m-d,m+d;n)$. According to 
\cite[Corollary 1]{buch.kresch.tamvakis:gromov-witten}
we have
\[
\langle [Y_u], [Y_v], [Y_w] \rangle_d =
\int_{\mathrm{Fl}(m-d,m+d;n)} [X_{u_d}] \cdot [X_{v_d}] \cdot [X_{w_d}].
\]
The desired result follows by applying Theorem~\ref{thm:main}.
\end{proof}


\providecommand{\bysame}{\leavevmode\hbox to3em{\hrulefill}\thinspace}
\providecommand{\MR}{\relax\ifhmode\unskip\space\fi MR }
\providecommand{\MRhref}[2]{%
  \href{http://www.ams.org/mathscinet-getitem?mr=#1}{#2}
}
\providecommand{\href}[2]{#2}

\end{document}